\newtheorem{theorem}{Theorem}[section]
\newtheorem{lemma}[theorem]{Lemma}
\newtheorem{corollary}[theorem]{Corollary}
\newtheorem{proposition}[theorem]{Proposition}
\newtheorem{example}[theorem]{Example}
\newtheorem{remark}[theorem]{Remark}
\begin{document}

\title{On Transiso Graph}
%
\author{ Vipul Kakkar$^1$
~ and Laxmi Kant Mishra$^2$\footnote{Author is supported by University Grants Commission, Government of India.}\\
$^1$School of Mathematics, Harish-Chandra Research Institute\\ Allahabad, India\\
$^2$Department of Mathematics, University of Allahabad \\
Allahabad (India) 211002\\
Email: vplkakkar@gmail.com; lkmp02@gmail.com}
\date{}
\maketitle
\begin{abstract}
In this note, we define a new graph $\Gamma_d(G)$ on a finite group $G$, where $d$ is a divisor of $|G|$. The vertices of $\Gamma_d(G)$ are the subgroups of $G$ of order $d$ and two subgroups $H_1$ and $H_2$ of $G$ are said to be adjacent if there exists $S_i \in \mathcal{T}(G,H_i)$ $(i=1,2)$ such that $S_1 \cong S_2$. We shall discuss the completeness of $\Gamma_d(G)$ for various groups like finite abelian groups, dihedral groups and some finite $p$-groups. 
\end{abstract}
\noindent \textbf{\textit{Mathematical Subject Classification (2010):}}{05C25, 20N05} \\
\noindent \textbf{\textit{Key words:}} Right loop; Normalized right transversal; Complete Graph
\section{Introduction}\label{int}

Let $G$ be a finite group and $H$ be a subgroup of $G$. A \emph{right transversal} $S$ of $H$ in $G$ is a subset of $G$ obtained by selecting one and only one element from each right coset of $H$ in $G$. $S$ is \emph{normalized right transversal (NRT)} if $1 \in S$. An NRT $S$ has an induced binary operation $\circ$ given by $\{x \circ y\}=S\cap Hxy$, with respect to which $S$ is a right loop with identity $1$, that is, a right quasigroup with both sided identity  (see \cite[Proposition 2.2, p.42]{smth},\cite{rltr}). Conversely, every right loop can be embedded as a normalized right transversal in a group with some universal property (see \cite[Theorem 3.4, p.76]{rltr}). Let $\langle S \rangle$ be the subgroup of $G$ generated by $S$ and $H_S$ be the subgroup $\langle S \rangle \cap H$. Then  $H_S=\langle \{xy(x \circ y)^{-1}|x, y \in S \} \rangle$ and $H_{S}S=\langle S \rangle$ (see \cite{rltr}).

Identifying $S$ with the set $H \backslash G$ of all right cosets of $H$ in $G$, we get a transitive permutation representation $\chi_{S}:G\rightarrow Sym(S)$ def{i}ned by $\{\chi_{S}(g)(x)\}=S\cap Hxg, ~ g\in G, x\in S$. The kernal $ker\chi_S$ of this action is $Core_{G}(H)$, the core of $H$ in $G$.
\\Let $G_{S}=\chi_{S}(H_{S})$. This group is known as the \textit{group torsion} of the right loop $S$ (see \cite[Definition 3.1, p.75]{rltr}). The group $G_S$ depends only on the right loop structure $\circ$ on $S$ and not on the subgroup $H$. Since $\chi_S$ is injective on $S$ and if we identify $S$ with $\chi_S(S)$, then $\chi_S(\langle S \rangle)=G_SS$ which also depends only on the right loop $S$ and $S$ is an NRT of $G_S$ in $G_SS$. One can also verify that $ ker(\chi_S|_{H_SS}: H_SS \rightarrow G_SS)=ker(\chi_S|_{H_S}: H_S \rightarrow G_S)=Core_{H_SS}(H_S)$ and $\chi_S|_S$=the identity map on $S$. If $H$ is a corefree subgroup of $G$, then there exists an NRT $T$ of $H$ in $G$ which generates $G$ (see \cite{cam}). In this case, $G=H_TT \cong G_TT$ and $H=H_T \cong G_T$. Also $(S, \circ)$ is a group if and only if $G_S$ trivial.

Let $\mathcal{T}(G, H)$ denote the set of all normalized right transversals (NRTs) of $H$ in $G$. Two NRTs $S$ , $T \in \mathcal{T}(G, H)$ are said to be \emph{isomorphic} (denoted by $S \cong T$), if their induced right loop structures are isomorphic.
A subgroup $H$ is normal in $G$ if and only if all NRTs of $H$ in $G$ are isomorphic to $G/H$ (see \cite[p.70]{rltr}).

Let $V$ be a set. Denote by $E(V)=\{\{u,v\}|u,v \in V, u \neq v\}$, the $2$-sets of $V$. A pair $\Gamma=(V,E)$ with $E \subseteq E(V)$ is called a graph on $V$ (see \cite{rd}). The elements of $V$ are called the \textit{vertices} of $\Gamma$ and those of $E$ the edges of $\Gamma$. If $\{u,v\} \in E$, then we say that $u$ and $v$ are adjacent. The number $|V|$ is called the order of $\Gamma$. A graph of order $0$ or $1$ is called as the trivial graph. The graph $(\emptyset,\emptyset)$ is called as the empty graph. The graph $\Gamma$ is called as  \textit{complete} if any two vertices are adjacent. 

\section{Transiso Graph}\label{tg}

Let $G$ be a finite group and $d$ be divisor of $|G|$ (order of $G$).Let $V_d$ be the set of all subgroups of $G$ of order $d$. We define a graph $\Gamma_d(G)=(V_d,E_d)$ with $\{H_1,H_2\} \in E_d$ if and only if there exists $S_i \in \mathcal{T}(G,H_i)$ $(i=1,2)$ such that $S_1 \cong S_2$ with respect to the right loop structure induced on $S_i$. We will call this graph a \textit{transiso graph}.


\begin{example}\label{e1} 
\begin{enumerate}
	\item \label{ex1}A finite cyclic group $C_n$ of order $n$ has unique subgroup corresponding to each divisor $d$ of $n$ so $\Gamma_d(C_n)$ is pointed graph for each divisor $d$ of $n$.
	\item \label{ex2}Let $G=C_2\times C_4=\langle a,b;a^2,b^4,aba^{-1}b^{-1}\rangle$. One can easily observe that $V_2=\{H_1=\langle a\rangle,H_2=\langle b^2\rangle,H_3=\langle ab^2\rangle\}$ and $V_4=\{K_1=\langle b\rangle,K_2=\langle ab\rangle,H_3=\langle a,b^2\rangle\}$. Also $H_1\cong H_2\cong H_3\cong C_2$, $G/H_1\cong G/H_3\cong C_4$, $G/H_2\cong C_2\times C_2$, $K_1\cong K_2\cong C_4$, $K_3\cong C_2\times C_2$, $G/K_1\cong G/K_2\cong G/K_3 \cong C_2$. We show the connectivity of subgroups in following pictorial form:
\setlength{\unitlength}{0.3in}
\begin{picture}(0,0)(2.7,2.5)
\put(0,0){\line(1,0){3}}
\put(-0.4,-.25){\makebox(0,0){$H_1$}}
\put(3.4,-.25){\makebox(0,0){$H_3$}}
\put(1.5,1.7){\makebox(0,0){$H_2$}}
\put(1.5,1.3){\makebox(0,0){$\bullet$}}
\put(0,0){\makebox(0,0){$\bullet$}}
\put(3,0){\makebox(0,0){$\bullet$}}
\put(8,0){\line(1,0){3}}
\put(8,0){\line(1,1){1.5}}
\put(11,0){\line(-1,1){1.5}}
\put(7.6,-.25){\makebox(0,0){$K_1$}}
\put(11.4,-.25){\makebox(0,0){$K_3$}}
\put(9.5,1.8){\makebox(0,0){$K_2$}}
\put(11,0){\makebox(0,0){$\bullet$}}
\put(8,0){\makebox(0,0){$\bullet$}}
\put(9.5,1.5){\makebox(0,0){$\bullet$}}
\end{picture}
\end{enumerate}
\end{example}
\vspace{2.3cm}

\begin{proposition} \label{p1}
A subgroup of a group $G$ is always adjacent with its automorphic images in $\Gamma_d(G)$ for any divisor $d$ of $|G|$.
\end{proposition}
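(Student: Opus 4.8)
The plan is to show that any automorphism $\phi$ of $G$ carries a given NRT of $H$ to an NRT of $\phi(H)$ while preserving the induced right loop structure; adjacency then follows immediately from the definition of $\Gamma_d(G)$. Throughout I would work with a fixed $\phi \in \operatorname{Aut}(G)$ and a subgroup $H \le G$ of order $d$. Since $\phi$ is a bijection, $|\phi(H)| = |H| = d$, so $\phi(H)$ is again a vertex of $\Gamma_d(G)$, and it makes sense to ask whether $H$ and $\phi(H)$ are adjacent.

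First I would choose an arbitrary $S \in \mathcal{T}(G, H)$ and argue that $\phi(S) \in \mathcal{T}(G, \phi(H))$. The point is that $\phi$ sends the right coset $Hx$ onto $\phi(H)\phi(x)$, so $\phi$ maps the partition of $G$ into right cosets of $H$ bijectively onto the partition into right cosets of $\phi(H)$. Hence $\phi(S)$ meets each right coset of $\phi(H)$ in exactly one point, and since $\phi(1) = 1 \in \phi(S)$, the set $\phi(S)$ is a normalized right transversal of $\phi(H)$ in $G$.

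The key step is to check that $\phi|_S \colon (S, \circ) \to (\phi(S), \circ')$ is an isomorphism of right loops, where $\circ$ and $\circ'$ denote the operations induced on $S$ and $\phi(S)$. For $x, y \in S$, the element $x \circ y$ is by definition the unique member of $S$ lying in $Hxy$; applying $\phi$, the element $\phi(x \circ y)$ is the unique member of $\phi(S)$ lying in $\phi(Hxy) = \phi(H)\phi(x)\phi(y)$. But that is precisely the description of $\phi(x) \circ' \phi(y)$. Thus $\phi(x \circ y) = \phi(x) \circ' \phi(y)$, and since $\phi|_S$ is already a bijection of $S$ onto $\phi(S)$, it is an isomorphism of the two right loop structures. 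Therefore $S \cong \phi(S)$, which by the definition of the transiso graph gives $\{H, \phi(H)\} \in E_d$, i.e.\ $H$ is adjacent to its automorphic image $\phi(H)$.

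I do not expect a genuine obstacle here: the statement is essentially the naturality of the NRT construction under automorphisms, and the proof is a direct verification rather than an argument requiring a new idea. The only points demanding care are the bookkeeping showing that $\phi$ respects the coset partition (so that $\phi(S)$ is genuinely a transversal and not merely a subset meeting every coset at least once), and the confirmation that $\phi$ intertwines the two $\circ$-operations rather than merely being a set bijection. Both follow from $\phi$ being a group automorphism, so the argument closes cleanly.
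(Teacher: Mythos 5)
Your proposal is correct and follows essentially the same route as the paper: take $S \in \mathcal{T}(G,H)$, observe that $\phi(S) \in \mathcal{T}(G,\phi(H))$, and verify from $\{x\circ y\} = S \cap Hxy$ that $\phi$ intertwines the induced right loop operations, so $S \cong \phi(S)$. The paper's proof is the same direct verification, just written more tersely.
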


\begin{proof}
Let $H$ be a subgroup of a group $G$. Let $f$ be an automorphism of $G$ and $K=f(H)$. Choose $S_1\in \mathcal{T}(G,H)$. Let $S_2=f(S_1)$. Observe that $S_2\in \mathcal{T}(G,K)$.

Let $x,y \in S_1$. Then $\{f(x\circ y)\}=f\{x\circ y\}=f(S_1\cap Hxy)=f(S_1)\cap f(Hxy)=S_2\cap Kf(x)f(y)=\{f(x)\circ f(y)\}$. This implies that $f|_{S_1}:S_1\rightarrow S_2$ is a right loop isomorphism. Hence, $H$ and $K$ are adjacent in $\Gamma_d(G)$.
\end{proof} 
\noindent Converse of Proposition \ref{p1} is not true in general (see Example \ref{e1}). Let $\delta(G)$ denotes the number of divisors of $|G|$ for which there is a subgroup of $G$ of that order.


\begin{corollary}
If number of orbits of the action of $\operatorname{Aut}(G)$ on the set $V$ of all subgroups of $G$ is equal to $\delta(G)$, then $\Gamma_d(G)$ is complete for each divisor $d$ of $|G|$.
\end{corollary}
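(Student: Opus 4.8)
The plan is to exploit the fact that every automorphism of $G$ preserves the order of a subgroup, so that the $\operatorname{Aut}(G)$-action on $V$ refines along the partition $V = \bigcup_d V_d$, where $d$ runs over the divisors of $|G|$ that admit a subgroup of that order. First I would observe that if $f \in \operatorname{Aut}(G)$ and $H \in V_d$, then $|f(H)| = |H| = d$, so $f(H) \in V_d$; thus every $\operatorname{Aut}(G)$-orbit is contained entirely in a single block $V_d$, and no orbit meets two different blocks $V_d$ and $V_{d'}$.

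The next step is a counting argument. There are exactly $\delta(G)$ nonempty blocks $V_d$, and each such block is a union of at least one orbit, so the total number of orbits is at least $\delta(G)$. The hypothesis forces this inequality to be an equality, which can occur only if each nonempty block $V_d$ consists of a single $\operatorname{Aut}(G)$-orbit. Hence, for every divisor $d$ of $|G|$ with $V_d \neq \emptyset$, any two subgroups $H_1, H_2 \in V_d$ lie in the same orbit, that is, there exists $f \in \operatorname{Aut}(G)$ with $f(H_1) = H_2$.

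Finally I would invoke Proposition \ref{p1}: since $H_2$ is an automorphic image of $H_1$, the two are adjacent in $\Gamma_d(G)$. As $H_1$ and $H_2$ were arbitrary vertices of $\Gamma_d(G)$, every pair of vertices is adjacent and the graph is complete; for those $d$ with $V_d = \emptyset$ the graph is empty and the statement holds vacuously.

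The only point I would state carefully is the equality case of the orbit count: the decomposition of $V$ along orders yields at least $\delta(G)$ orbits unconditionally, and it is precisely the saturation of this lower bound that collapses each $V_d$ to a single orbit. I do not expect a genuine computational obstacle here; the entire content lies in the observation that order-preservation confines orbits to the blocks $V_d$, after which completeness is immediate from Proposition \ref{p1}.
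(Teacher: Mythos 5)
Your proof is correct and follows exactly the route the paper intends: the corollary is stated without proof immediately after Proposition \ref{p1}, and the intended argument is precisely your orbit-counting observation (orbits respect the partition of $V$ by order, so equality with $\delta(G)$ forces each nonempty $V_d$ to be a single orbit) combined with Proposition \ref{p1}. No gaps.
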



\begin{proposition}\label{p2}
Let $H_1$ and $H_2$ be corefree subgroups of $G$. Let $S_i \in \mathcal{T}(G,H_i)$ ($i=1,2$) such that $S_1 \cong S_2$ and $\langle S_i \rangle=G$.
Then an isomorphism between $S_1$ and $S_2$ can be extended to an automorphism of $G$ which sends $H_1$ onto $H_2$.
\end{proposition}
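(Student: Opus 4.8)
The plan is to push the whole problem into a symmetric group via the permutation representations $\chi_{S_1}$ and $\chi_{S_2}$, and to produce the required automorphism as the conjugation induced by the given right loop isomorphism. First I would cash in the two hypotheses. Since $H_i$ is corefree, $\ker\chi_{S_i}=Core_G(H_i)=1$, so $\chi_{S_i}$ is injective on $G$; since $\langle S_i\rangle=G$ we get $H_{S_i}=\langle S_i\rangle\cap H_i=H_i$ and $\chi_{S_i}(G)=\chi_{S_i}(\langle S_i\rangle)=G_{S_i}S_i$. Hence each $\chi_{S_i}\colon G\to G_{S_i}S_i$ is an isomorphism carrying $H_i=H_{S_i}$ onto $G_{S_i}=\chi_{S_i}(H_{S_i})$.

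The key computational observation is that for $x\in S$ the permutation $\chi_S(x)$ is exactly the right translation $R_x\colon z\mapsto z\circ x$ of the right loop, because $\{\chi_S(x)(z)\}=S\cap Hzx=\{z\circ x\}$. Thus $\chi_S(S)=\{R_x\mid x\in S\}$, the group $G_SS=\chi_S(\langle S\rangle)$ is the right multiplication group $\langle R_x\mid x\in S\rangle$, and evaluating at the identity (for $h\in H_S$ one has $\chi_S(h)(1)=1$, while $\chi_S(hs)(1)=s$ for $s\in S$) identifies $G_S$ with the stabilizer of $1$ in $G_SS$. All three objects are therefore manifestly determined by the loop structure $(S,\circ)$ alone, which is the content we need from the invariance statements quoted in Section \ref{int}.

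Next, let $\phi\colon S_1\to S_2$ be the given right loop isomorphism. Such a map fixes the identity, $\phi(1)=1$, which follows by applying right cancellation to $\phi(1)\circ\phi(1)=\phi(1)=1\circ\phi(1)$. From $\phi(z\circ x)=\phi(z)\circ\phi(x)$ one reads off the intertwining relation $\phi\circ R_x=R_{\phi(x)}\circ\phi$. Let $\Phi\colon Sym(S_1)\to Sym(S_2)$ be the isomorphism $\sigma\mapsto\phi\sigma\phi^{-1}$. The intertwining relation gives $\Phi(R_x)=R_{\phi(x)}$, so $\Phi$ carries the generating set $\{R_x\mid x\in S_1\}$ bijectively onto $\{R_y\mid y\in S_2\}$; consequently $\Phi(G_{S_1}S_1)=G_{S_2}S_2$, and since $\Phi$ sends the stabilizer of $1$ to the stabilizer of $\phi(1)=1$, also $\Phi(G_{S_1})=G_{S_2}$.

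Finally I would set $\alpha=\chi_{S_2}^{-1}\circ\Phi\circ\chi_{S_1}$. As a composite of the isomorphisms $G\to G_{S_1}S_1\to G_{S_2}S_2\to G$ it is an automorphism of $G$; it sends $H_1=\chi_{S_1}^{-1}(G_{S_1})$ to $\chi_{S_2}^{-1}(G_{S_2})=H_2$; and for $x\in S_1$ one computes $\alpha(x)=\chi_{S_2}^{-1}(\Phi(R_x))=\chi_{S_2}^{-1}(R_{\phi(x)})=\phi(x)$, so $\alpha$ extends $\phi$. I expect the only genuine obstacle to lie in the third step, namely verifying that the group torsion $G_S$ is recovered inside $G_SS$ as a point stabilizer (so that it is transported correctly by $\Phi$) together with the small check $\phi(1)=1$; once $G_SS$ and $G_S$ are pinned down as loop invariants, the conjugation $\Phi$ does all the work and its compatibility with the embeddings $\chi_{S_i}$ is automatic.
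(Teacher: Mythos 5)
Your proof is correct and follows essentially the same route as the paper: identify $G$ with $G_{S_i}S_i$ via the representations $\chi_{S_i}$ (isomorphisms onto their images by corefreeness and $\langle S_i\rangle=G$), transport the loop isomorphism to an isomorphism $G_{S_1}S_1\rightarrow G_{S_2}S_2$ carrying $G_{S_1}$ to $G_{S_2}$, and compose to obtain $\chi_{S_2}^{-1}\circ\tilde{p}\circ\chi_{S_1}$. The only difference is that where the paper cites the discussion after Lemma 2.5 of Shukla for the existence of $\tilde{p}$, you construct it explicitly as conjugation by $\phi$ inside $Sym(S)$, using $\chi_S(x)=R_x$ and the identification of $G_S$ with the stabilizer of $1$ in $G_SS$ --- a self-contained rendering of the same argument.
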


\begin{proof}
Note that $H_{S_i}=\langle S_i\rangle\cap H_i=G\cap H_i=H_i$ ($i=1,2$) and hence $H_{S_i}S_i=G$.
Let $p:S_1\rightarrow S_2$ be a right loop isomorphism. It gives rise to an isomorphism $\tilde{p} :G_{S_1}S_1\rightarrow G_{S_2}S_2$ such that $\tilde{p}(G_{S_1})=G_{S_2}$ (see the discussion following \cite[Lemma 2.5, p. 2684]{rpsc}).
Since $H_i$ ($i=1,2$) is corefree, $\chi_{S_i}$ defined in the section $1$ is an isomorphism. Now, one can note $\chi_{S_2}^{-1} \circ \tilde{p} \circ \chi_{S_1}$ is an automorphism of $G$ which is an extension of $p$ and sends $H_1$ to $H_2$.
\end{proof}


Let $H$ be a subgroup of $Sym(n)$ of order $2$ and $S\in \mathcal{T}(Sym(n),H)$. Then $H_S=\langle S\rangle\cap H$ has order at most $2$. Assume that $|H_S|=2$. Then $H_S=H$, so $|\langle S\rangle|=|HS|=n!$ and hence $\langle S\rangle=Sym(n)$. Assume that $|H_S|=1$. Then $S$ is a subgroup of  $Sym(n)$ of order $\frac{n!}{2}$. One can note that $H$ can not be generated by an even permutation of order $2$. In this case, $ S =Alt(n)$, where $Alt(m)$ denotes the alternating group of degree $m$. 


\begin{example}
\begin{enumerate}
\item Let $H$ and $K$ be distinct subgroups of $Sym(n)$ of order $2$ generated by odd permutations. Then one can observe that $Alt(n) \in \mathcal{T}(Sym(n),H)\cap \mathcal{T}(Sym(n),K)$. Therefore $H$ and $K$ are adjacent in $\Gamma_2(Sym(n))$.

\item  Let $H$ and $K$ are subgroups of $Sym(n)$ of order $2$ generated by even permutations. Then as argued in the above paragraph, all NRTs of $H$ and $K$ generate $Sym(n)$. If $H$ and $K$ are adjacent, then by Proposition \ref{p2} there is an automorphism sending $H$ onto $K$. If $n\neq 6$, then $H$ and $K$ conjugate (for $\operatorname{Aut}(Sym(n))\cong \operatorname{Inn}(Sym(n))$ for $n\neq 6$ (see \cite[p.300]{suz})). If $n=6$, then $H$ and $K$ are again conjugate (for all automorphisms of $Sym(6)$ send a permutation of cycle type $(2,2,1,1)$ to permutation of cycle type $(2,2,1,1)$).
Moreover, if $H$ and $K$ are conjugate, the by Proposition \ref{p1} $H$ and $K$ are adjacent.

\item Now if $H$ is generated by an even permutation of order $2$ and $K$ is generated by an odd permutation of order $2$, then they are not adjacent in $\Gamma_2(Sym(n))$ (for otherwise they will be conjugate).

\item Since all subgroups of $Alt(n)$ ($n \leq 5$) of same order are conjugate, by Proposition \ref{p1} $\Gamma_d(Alt(n))$ ($n \leq 5$) is complete for each divisor $d$ of $\frac{n!}{2}$. As argued above, one can observe that $\Gamma_2(Alt(n))$ ($n \geq 8$)  is not complete.

\item Let $n \geq 6$. Let $H_1$ and $H_2$ be two subgroups of $Alt(n)$ such that $H_1\cong C_4$ and $H_2\cong C_2\times C_2$. Since $Alt(m)$ ($m\geq 5$) has no subgroup of index less than $m$, $|H_{S_i}| \neq 1$ and $|H_{S_i}| \neq 2$ ($i=1,2$). This implies that $|H_{S_i}|=4$ ($i=1,2$). Hence, all members of $\mathcal{T}(Alt(n),H_i)$ generates $Alt(n)$. Now, by Proposition \ref{p2} one observes that $H_1$ and $H_2$ are not adjacent in $\Gamma_4(Alt(n))$.    
\end{enumerate}
\end{example}

\begin{proposition}\label{pc}
For a finite abelian group $G$, $\Gamma_d(G)$ is complete for each divisor $d$ of $|G|$ if and only if each sylow subgroup of $G$ is either elementary abelian or cyclic.
\end{proposition}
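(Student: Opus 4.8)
The plan is to translate adjacency in $\Gamma_d(G)$ into a statement purely about quotient groups, after which the problem becomes a structural computation in finite abelian groups. Since $G$ is abelian, every subgroup $H$ is normal, and the fact recalled in Section~\ref{int} (a subgroup is normal iff all of its NRTs are isomorphic to the corresponding quotient) tells us that every $S\in\mathcal{T}(G,H)$ carries the right loop structure of the group $G/H$. Consequently two subgroups $H_1,H_2$ of order $d$ are adjacent in $\Gamma_d(G)$ precisely when $G/H_1\cong G/H_2$; the existential quantifier in the definition of adjacency is harmless here, because every choice of NRT yields the same (group) structure. Thus $\Gamma_d(G)$ is complete if and only if all subgroups of $G$ of order $d$ have pairwise isomorphic quotients.

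First I would reduce to the case of a $p$-group. Writing $G=\prod_p G_p$ as the product of its Sylow subgroups, every subgroup of order $d=\prod_p p^{a_p}$ has the form $H=\prod_p H_p$ with $H_p\le G_p$ of order $p^{a_p}$, and $G/H\cong\prod_p G_p/H_p$. By uniqueness of the primary decomposition, $G/H_1\cong G/H_2$ holds iff $G_p/(H_1)_p\cong G_p/(H_2)_p$ for every $p$. Hence $\Gamma_d(G)$ is complete for every divisor $d$ if and only if, for each prime $p$ and each exponent $a$, all subgroups of $G_p$ of order $p^a$ have isomorphic quotients. In this way the whole statement reduces to the claim that, for an abelian $p$-group $P$, all the graphs $\Gamma_{p^a}(P)$ are complete iff $P$ is elementary abelian or cyclic.

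For the \emph{if} direction I would dispatch the two cases directly. If $P$ is cyclic it has a unique subgroup of each order, so each $\Gamma_{p^a}(P)$ is a single vertex (as in the cyclic case of Example~\ref{e1}) and hence complete. If $P$ is elementary abelian, i.e. an $\F_p$-vector space, then a subgroup of order $p^a$ is an $a$-dimensional subspace, its quotient is a vector space of dimension $\dim P-a$, and all such quotients are isomorphic; alternatively, $\operatorname{GL}$ acts transitively on $a$-dimensional subspaces, so any two such subgroups are automorphic images of one another and are therefore adjacent by Proposition~\ref{p1}.

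The main work is the \emph{only if} direction, which I would prove by contraposition with an explicit non-adjacent pair. Suppose $P\cong C_{p^{e_1}}\times\cdots\times C_{p^{e_k}}$ with $e_1\ge\cdots\ge e_k\ge 1$ is neither cyclic nor elementary abelian; this means exactly that $k\ge 2$ and $e_1\ge 2$. Writing $P=\langle a_1\rangle\times\cdots\times\langle a_k\rangle$ and $n=\sum_i e_i$, I would exhibit the two subgroups
\[
K_1=\langle a_1^{p^2}\rangle\times\langle a_2\rangle\times\cdots\times\langle a_k\rangle,\qquad
K_2=\langle a_1^{p}\rangle\times\langle a_2^{p}\rangle\times\langle a_3\rangle\times\cdots\times\langle a_k\rangle,
\]
both of order $p^{\,n-2}$, with $P/K_1\cong C_{p^2}$ and $P/K_2\cong C_p\times C_p$. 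Since these quotients are non-isomorphic, $K_1$ and $K_2$ are non-adjacent vertices of $\Gamma_{p^{\,n-2}}(P)$, so that graph is not complete. The only real subtlety, and the place I would be most careful, is verifying that this single construction works uniformly for all admissible types $(e_1,\dots,e_k)$ (in particular that $|K_1|=|K_2|=p^{\,n-2}$ and the quotient shapes are insensitive to whether $e_1=2$ or $e_1>2$ and whether $e_2=1$ or $e_2\ge 2$), and then transporting the pair back to $G$ by setting $H_i=K_i\times\prod_{q\ne p}G_q$ to defeat completeness of $\Gamma_d(G)$ for the corresponding divisor $d$.
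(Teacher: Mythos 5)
Your proposal is correct, and its overall strategy coincides with the paper's: since $G$ is abelian every subgroup is normal, so adjacency of $H_1$ and $H_2$ reduces to $G/H_1\cong G/H_2$, and one then either verifies that all equal-order subgroups have isomorphic quotients (cyclic or elementary abelian Sylow case) or exhibits a pair with non-isomorphic quotients. Where you differ is in the witnesses for the ``only if'' direction. The paper works inside a summand $C_{p^\alpha}\times C_{p^\beta}$ with $\alpha>1$ and splits into two cases: if $\alpha>\beta$ it takes two subgroups of order $p$ (one in each factor) whose quotients differ, and if $\alpha=\beta>1$ it takes a $C_{p^2}$ versus a $C_p\times C_p$ of order $p^2$. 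Your construction is dual --- large subgroups $K_1,K_2$ of index $p^2$ with quotients $C_{p^2}$ and $C_p\times C_p$ --- and it is uniform: a single pair works for every admissible type $(e_1,\dots,e_k)$ with $k\ge 2$ and $e_1\ge 2$, so no case analysis is needed, at the cost of a slightly more careful order count. You are also more explicit than the paper about the reduction to a single Sylow subgroup (the primary decomposition of subgroups and quotients and the transport of the counterexample back to $G$), which the paper handles implicitly by phrasing everything in terms of a direct summand. Both arguments are complete; yours is marginally cleaner in avoiding the dichotomy $\alpha>\beta$ versus $\alpha=\beta$.
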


\begin{proof}
One can easily check the 'if' part. We will observe the 'only if' part.

Assume that $G$ is not isomorphic to the group stated in the proposition and $\Gamma_d(G)$ is complete for each divisor $d$ of $|G|$. Then by Fundamental Theorem of abelian groups, there must be a summand of $G$ isomorphic to $C_{p^\alpha}\times C_{p^\beta}$ for some prime divisor $p$ of $|G|$, where $\alpha,\beta$ are positive integers and atleast one of $\alpha$ and $\beta$ is greater than $1$. Without any loss, let us assume that $\alpha >1$ and $C_{p^\alpha}\times C_{p^\beta}$ is at first place.

If $\alpha>\beta\geq 1$, then there exist two subgroups $H_1$ and $H_2$  such that $H_1 \cong C_p\times \{1\}\times \cdots$ and $H_2 \cong \{1\}\times C_p \times \cdots$. One can observe that $G/H_1\ncong G/H_2$. This is a contradiction.

If $\alpha=\beta>1$, then there exist two subgroups $H_1$ and $H_2$ such that $H_1 \cong C_{p^2}\times \{1\}\times\cdots$ and $H_2 \cong C_p\times C_p \times\cdots$.  Then $G/H_1\ncong G/H_2$. This is again a contradiction.
\end{proof}

\begin{corollary}
If $G$ is elementary abelian group, then $\Gamma_d(G)$ is complete for each divisor $d$ of $|G|$.
\end{corollary}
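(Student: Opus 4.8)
The plan is to deduce this at once from Proposition \ref{pc}. First I would recall that an elementary abelian group is, by definition, a direct product $(C_p)^n$ of $n$ copies of $C_p$ for a single prime $p$; in particular $|G|=p^n$ is a prime power, so $G$ is its own unique Sylow $p$-subgroup. Since that Sylow subgroup---namely $G$ itself---is elementary abelian by hypothesis, the criterion of Proposition \ref{pc} (``each Sylow subgroup is elementary abelian or cyclic'') is satisfied trivially, and the completeness of $\Gamma_d(G)$ for every divisor $d$ of $|G|$ follows immediately. This is really all that is needed, since the statement is precisely the degenerate case of Proposition \ref{pc} in which there is a single Sylow subgroup and the elementary-abelian alternative holds.

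Alternatively, and more self-containedly, I would argue directly through Proposition \ref{p1}. Viewing $G=(C_p)^n$ as the vector space $\F_p^n$, every subgroup of $G$ is an $\F_p$-subspace, the subgroup orders occurring are exactly $p^0,p^1,\ldots,p^n$, and a subgroup of order $p^k$ is precisely a $k$-dimensional subspace. The key fact is that $\operatorname{Aut}(G)\cong GL_n(\F_p)$ acts transitively on the set of $k$-dimensional subspaces for each fixed $k$: any basis of one $k$-subspace extends to a basis of $\F_p^n$, and a linear isomorphism matching one extended basis to another carries the first subspace onto the second. Hence any two subgroups of the same order $d=p^k$ are automorphic images of one another, so Proposition \ref{p1} makes them adjacent in $\Gamma_d(G)$; as this holds for every pair of vertices, $\Gamma_d(G)$ is complete.

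I expect no genuine obstacle here. The only point requiring verification is the transitivity of $GL_n(\F_p)$ on subspaces of a given dimension, which is standard linear algebra. One may also note that the conclusion drops out of the Corollary following Proposition \ref{p1}: for $(C_p)^n$ the number of $\operatorname{Aut}(G)$-orbits on the set of all subgroups is $n+1$ (one orbit for each dimension $0,1,\ldots,n$), which equals $\delta(G)$, so that Corollary applies verbatim. Any of these three routes---reduction to Proposition \ref{pc}, transitivity plus Proposition \ref{p1}, or the orbit-count Corollary---completes the argument.
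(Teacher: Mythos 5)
Your proposal is correct and its primary route is exactly the paper's: the corollary is stated immediately after Proposition \ref{pc} as a direct consequence of its ``if'' direction, since an elementary abelian $p$-group is its own unique Sylow subgroup and is elementary abelian. Your alternative arguments via the transitivity of $GL_n(\F_p)$ on $k$-dimensional subspaces together with Proposition \ref{p1} (or the orbit-count corollary) are also valid and have the merit of being self-contained, since the paper leaves the ``if'' part of Proposition \ref{pc} unproved.
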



Let $D_{2n}$ denotes the dihedral group of order $2n$. We need following elementary lemma to prove that $\Gamma_d(D_{2n})$ is complete for each divisor $d$ of $|G|$. The proof of following Lemma can be found in \cite[Theorem 2.37, p. 54]{sr} and \cite[Theorem 3.3, p. 5]{kc}. 
\begin{lemma} \label{l1}
A subgroup a dihedral group $D_{2n}=\langle a,b;a^n,b^2,(ba)^2\rangle$ is either cyclic or dihedral. Moreover if $m$ is a divisor of $2n$ and
\begin{enumerate}
	\item $m$ is odd then all $m$ subgroups of index $m$ are conjugate to $\langle a^m,b\rangle$.
	\item $m$ is even and $m$ does not divide $n$ then there is only one subgroup $\langle a^{\frac{m}{2}}\rangle$ of index $m$.
	\item $m$ is even and $m$ divides $n$ then a subgroup of index $m$ is either $\langle a^{\frac{m}{2}}\rangle$ or conjugate to exactly one of $\langle a^m,b\rangle$ or $\langle a^m,ba\rangle$.	
\end{enumerate}
\end{lemma}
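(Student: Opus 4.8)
The plan is to analyze the subgroups of $D_{2n}$ through their intersection with the cyclic rotation subgroup $R=\langle a\rangle$, which has index $2$ and is normal. First I would record the two facts that drive everything: every element outside $R$ is a reflection of order $2$, since $a^ib\cdot a^ib = a^i(ba^i)b = a^i a^{-i}b^2 = 1$ using the relation $bab^{-1}=a^{-1}$ extracted from $(ba)^2=1$; and $R$ is cyclic, so each of its subgroups is $\langle a^k\rangle$ for a unique divisor $k$ of $n$. For the dichotomy, let $H\le D_{2n}$ and put $H_0=H\cap R=\langle a^k\rangle$. If $H\subseteq R$ then $H=H_0$ is cyclic. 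Otherwise $H$ contains a reflection, and since $H/H_0$ embeds in $D_{2n}/R\cong C_2$ nontrivially we get $[H:H_0]=2$; choosing a reflection $a^ib\in H$ gives $H=\langle a^k,a^ib\rangle$, dihedral of order $2(n/k)$. This proves the first assertion and at the same time parametrizes every subgroup.

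Next I would translate the parametrization into orders and indices. The cyclic subgroups are $\langle a^k\rangle$ (order $n/k$, index $2k$) together with the order-$2$ reflection subgroups $\langle a^ib\rangle$ (index $n$), while the dihedral subgroups are $\langle a^k,a^ib\rangle$ (order $2n/k$, index $k$), the latter depending only on $k\mid n$ and on $i \bmod k$. A dihedral subgroup of index $m$ thus forces $k=m$ and hence requires $m\mid n$, and there are then exactly $m$ of them, namely $\langle a^m,a^ib\rangle$ for $i=0,1,\dots,m-1$. A cyclic subgroup inside $R$ of index $m$ exists if and only if $m$ is even (whence $m/2\mid n$ automatically from $m\mid 2n$), in which case it is the unique group $\langle a^{m/2}\rangle$, and the reflection subgroups contribute only at index $m=n$. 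Reading this off gives the three regimes directly: for $m$ odd only the $m$ dihedral subgroups occur; for $m$ even with $m\nmid n$ only $\langle a^{m/2}\rangle$ occurs, settling case (2) at once; and for $m$ even with $m\mid n$ both $\langle a^{m/2}\rangle$ and the $m$ dihedral subgroups occur.

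Finally I would determine conjugacy by direct computation inside the dihedral family. Conjugation by a rotation gives $a^t\langle a^m,a^ib\rangle a^{-t}=\langle a^m,a^{\,i+2t}b\rangle$, so it shifts the index $i$ by $2t\bmod m$, while conjugation by $b$ sends $i\mapsto -i$ (because $b\,a^ib\,b^{-1}=a^{-i}b$ and $ba^mb^{-1}=a^{-m}$). When $m$ is odd, $2$ is invertible modulo $m$, so the orbit of $i=0$ is all of $\mathbb{Z}/m\mathbb{Z}$ and the $m$ dihedral subgroups form a single conjugacy class containing $\langle a^m,b\rangle$, which is case (1). When $m$ is even, both $i\mapsto i+2t$ and $i\mapsto -i$ preserve the parity of $i$, so the dihedral subgroups split into the even-$i$ class of $\langle a^m,b\rangle$ and the odd-$i$ class of $\langle a^m,ba\rangle$; here one checks $ba=a^{-1}b=a^{\,n-1}b$ with $n\equiv 0\bmod m$, so its index $m-1$ is odd, placing it in the other class. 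This yields case (3).

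I expect the genuine subtlety to lie in this parity bookkeeping rather than in the classification itself. The delicate points are verifying that the even-$i$ and odd-$i$ classes are truly distinct (so that ``conjugate to \emph{exactly one} of $\langle a^m,b\rangle$ or $\langle a^m,ba\rangle$'' is justified) and that every residue $i\bmod m$ is actually realized as a subgroup — which is precisely where the hypotheses $m\mid n$ versus $m\nmid n$ and the parity of $m$ interact, and where the full conjugating set (rotations \emph{and} reflections) must be used to rule out any further coincidences among the $\langle a^m,a^ib\rangle$.
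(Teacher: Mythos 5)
Your proposal is correct, and it is worth noting that the paper does not actually prove this lemma at all: it simply cites Roman's book and Conrad's notes for the proof. So you have supplied a complete, self-contained argument where the paper outsources the work, and the argument you give (intersect with the normal rotation subgroup $R=\langle a\rangle$, use $[H:H\cap R]\le 2$ to get the cyclic/dihedral dichotomy, parametrize the dihedral subgroups as $\langle a^k,a^ib\rangle$ by $k\mid n$ and $i\bmod k$, then compute the conjugation action $i\mapsto i+2t$, $i\mapsto 2t-i$ and separate the cases by whether $2$ is invertible mod $m$) is the standard one found in those references. The only loose end is the point you yourself flag at the close: to justify ``exactly $m$ subgroups'' in case (1) and ``exactly one of'' in case (3) you need that $\langle a^m,a^ib\rangle=\langle a^m,a^jb\rangle$ forces $i\equiv j\pmod m$. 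This is immediate once you observe that the reflections contained in $\langle a^m,a^ib\rangle$ are precisely the elements $a^lb$ with $l\equiv i\pmod m$, so the residue $i\bmod m$ is recoverable from the subgroup; adding that one sentence closes the argument completely.
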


\begin{proposition}
Let $D_{2n}$ denote the dihedral group of order $2n$. Then $\Gamma_d(D_{2n})$ is complete for each divisor $d$ of $2n$.
\end{proposition}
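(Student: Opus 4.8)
The plan is to use the subgroup classification in Lemma \ref{l1} to reduce the claim to a small number of adjacencies, and to settle the one genuinely nontrivial adjacency by passing to a quotient. Write $m=2n/d$ for the index of a subgroup of order $d$. By Lemma \ref{l1}, if $m$ is odd then every subgroup of order $d$ is conjugate to $\langle a^m,b\rangle$, and if $m$ is even with $m\nmid n$ then $\langle a^{m/2}\rangle$ is the only subgroup of order $d$; in both situations $V_d$ is a single conjugacy class or a single vertex, so Proposition \ref{p1} makes $\Gamma_d(D_{2n})$ complete at once. In terms of $d$ these are the cases $d$ odd (then $m$ is even and $m\nmid n$) and $d$ even with $d\nmid n$ (then $m$ is odd), leaving only $d$ even with $d\mid n$ (equivalently $m$ even and $m\mid n$) to treat.

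In that remaining case Lemma \ref{l1} says $V_d$ consists of the cyclic subgroup $C=\langle a^{m/2}\rangle$, which lies in $\langle a\rangle$ and is therefore normal, together with the dihedral subgroups of order $d$, which split into the conjugacy classes of $\langle a^m,b\rangle$ and $\langle a^m,ba\rangle$. I would first exploit $\operatorname{Aut}(D_{2n})$: an automorphism $a\mapsto a^k$ ($\gcd(k,n)=1$), $b\mapsto a^ib$ sends $\langle a^m,b\rangle$ to $\langle a^m,a^ib\rangle$, so as $i$ varies $\operatorname{Aut}(D_{2n})$ is transitive on all dihedral subgroups of order $d$, while it fixes the unique $C=\langle a^{m/2}\rangle$. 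Since an automorphism carries a pair of isomorphic NRTs to a pair of isomorphic NRTs (the computation in Proposition \ref{p1}), it is a graph automorphism of $\Gamma_d(D_{2n})$. Hence all dihedral subgroups of order $d$ are mutually adjacent, and the whole proof collapses to the single adjacency $C\sim\langle a^m,b\rangle$, because any other dihedral subgroup of order $d$ is the image of $\langle a^m,b\rangle$ under an automorphism fixing $C$.

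For this last adjacency, note that $C$ being normal forces every NRT of $C$ to be isomorphic, as a right loop, to the group $G/C$, which is dihedral of order $m$; so it suffices to produce one NRT of $H:=\langle a^m,b\rangle$ whose induced right loop is dihedral of order $m$. The direct approach of finding a subgroup complement of $H$ fails in general: for example in $D_{16}$ every subgroup of order $4$ contains the central involution, so $H$ has no complement whatsoever. The device I would use is reduction modulo the core. For $m>2$ one checks $\operatorname{Core}_{D_{2n}}(H)=\langle a^m\rangle$, and in $\bar G:=D_{2n}/\langle a^m\rangle\cong D_{2m}$ the image $\bar H$ is the order-$2$ reflection subgroup $\langle\bar b\rangle$; moreover the right cosets of $H$ in $D_{2n}$ correspond bijectively and compatibly with multiplication to those of $\bar H$ in $\bar G$, so the right-loop isomorphism types realized by NRTs of $H$ coincide with those realized by NRTs of $\bar H$. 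In $D_{2m}$ the reflection subgroup $\langle\bar b\rangle$ does admit a complement, namely the opposite-parity dihedral subgroup $\langle\bar a^2,\bar a\bar b\rangle$, which is dihedral of order $m$ and meets $\langle\bar b\rangle$ trivially; this complement is a group NRT isomorphic to $\bar G/\bar C\cong G/C$. Lifting back, $H$ has an NRT whose right loop is dihedral of order $m$, that is $\cong G/C$, so $C\sim H$. (When $m=2$ all subgroups of order $d=n$ have index $2$, hence are normal with quotient $C_2$, and adjacency is immediate.)

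Combining the three facts — conjugate and automorphic subgroups are adjacent, all dihedral subgroups of order $d$ are mutually adjacent, and $C\sim\langle a^m,b\rangle$ — shows $\Gamma_d(D_{2n})$ is complete. I expect the main obstacle to be exactly the cyclic-versus-dihedral adjacency: one must recognize that no honest complement of $H$ need exist inside $D_{2n}$, and that the correct move is to quotient by $\operatorname{Core}(H)$ so that the transversal problem becomes the transparent one in $D_{2m}$, where a dihedral complement of the right order is plainly visible. A secondary nuisance is the bookkeeping between Lemma \ref{l1}, phrased by the index $m$, and the statement, phrased by the order $d$, together with checking that the listed conjugacy classes really exhaust $V_d$.
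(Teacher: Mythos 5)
Your proposal is correct, and it handles the only hard case by a genuinely different device than the paper. Both proofs dispose of the cases ``$m$ odd'' and ``$m$ even, $m\nmid n$'' identically via Lemma \ref{l1} and Proposition \ref{p1}, and both reduce the remaining case to showing that the non-normal dihedral subgroups of order $d$ admit an NRT whose induced right loop is dihedral of order $m$ (matching the normal cyclic vertex, all of whose NRTs are $\cong G/\langle a^{m/2}\rangle\cong D_m$). The paper does this by writing down explicit transversals $S_2=\{a^{2i},ba^{2j+1}\}$ and $S_3=\{a^{2i},ba^{2j}\}$ inside $D_{2n}$ for \emph{both} conjugacy classes and invoking the $H_S$/$G_S$ machinery: it checks $H_{S_i}=\langle a^m\rangle\trianglelefteq\langle S_i\rangle$, concludes $G_{S_i}=\{1\}$ so that $(S_i,\circ_i)$ is a group, and verifies the dihedral relations in $\circ_i$. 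You instead (i) collapse the two dihedral classes into one adjacency problem via the automorphism $a\mapsto a$, $b\mapsto a^{-1}b$, and (ii) quotient by $\operatorname{Core}_{D_{2n}}(H)=\langle a^m\rangle$ so that the transversal you need becomes an honest subgroup complement $\langle\bar a^2,\bar a\bar b\rangle$ of $\langle\bar b\rangle$ in $D_{2m}$ --- your $D_{16}$ example correctly explains why one cannot hope for a complement upstairs, and your lifted NRT is essentially the paper's $S_2$ in disguise. What your route buys is transparency (no right-loop relation checking; the group structure of the transversal is visible in the quotient); what it costs is the auxiliary fact that passing to $G/\operatorname{Core}_G(H)$ preserves the set of right-loop isomorphism types realized by NRTs of $H$. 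That fact is true and is a one-line computation from $\{x\circ y\}=S\cap Hxy$ (the projection is injective on any transversal since $\operatorname{Core}_G(H)\le H$, and $\pi(S\cap Hxy)=\pi(S)\cap\bar H\bar x\bar y$), but it is nowhere stated in the paper, so you should record it explicitly, along with the core computation $\operatorname{Core}_{D_{2n}}(\langle a^m,b\rangle)=\langle a^m\rangle$ for $m>2$.
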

\begin{proof}
Let $G=D_{2n}=\langle a,b;a^n,b^2,(ba)^2\rangle$. Let $d$ be a divisor of $2n$ and $m=\frac{2n}{d}$. 

Assume that $m$ is odd. Then by Lemma \ref{l1}, there are $m$ subgroups of $G$ of order $d$ and all are conjugate to $\langle a^m, b\rangle$. Therefore, by  Proposition \ref{p1}, $\Gamma_d(G)$ is complete for the divisor $d$ for which $m$ is odd.

Assume that $m$ is even and $m$ does not divide $n$. Then, by Lemma \ref{l1} there is only one subgroup $\langle a^{\frac{m}{2}}\rangle$ of order $d$. Therefore, $\Gamma_d(G)$ is complete for the divisor $d$ for which $m$ is even and $m$ does not divide $n$.

Finally, assume that $m$ is even and $m$ divides $n$. Then by Lemma \ref{l1}, a subgroup of order $d$ is either $\langle a^{\frac{m}{2}}\rangle$ or conjugate to exactly one of $\langle a^m,b\rangle$ or $\langle a^m,ba\rangle$. Let $H_1=\langle a^{\frac{m}{2}}\rangle$, $H_2=\langle a^m,b\rangle$ and $H_3=\langle a^m,ba\rangle$.
Note that $H_1$ is a normal subgroup of $G$.  Hence, all NRTs of $H_1$ in $G$ are isomorphic to $G/H_1 \cong D_{2 \cdot\frac{m}{2}}$.

Now choose $S_2=\{1,a^2,a^4,\ldots,a^{m-2},ba,ba^3,ba^5,\ldots,ba^{m-1}\}=\{a^{2i},ba^{2j+1}|0\leq i,j\leq (\frac{m}{2}-1)\}$ in $\mathcal{T}(D_{2n},H_2)$ and $S_3=\{a^{2i},ba^{2j}|0\leq i,j\leq (\frac{m}{2}-1)\}$ in $\mathcal{T}(D_{2n},H_3)$. Note that $\langle S_2\rangle =\langle a^2,ba \rangle$ and $\langle S_3\rangle =\langle a^2,b \rangle$. Then
$H_{S_2}=\langle S_2 \rangle \cap H_2=\langle a^m \rangle \trianglelefteq \langle S_2 \rangle$ and $H_{S_3}=\langle S_3 \rangle \cap H_3=\langle a^m \rangle \trianglelefteq \langle S_3 \rangle$. Therefore $G_{S_2}=G_{S_3}=\{1\}$ and hence $S_2$ and $S_3$ are groups.

Let $\circ_2$ denote the induced binary operation on $S_2$ as described in the first paragraph of section \ref{int}. One can observe that, $(a^2)^{\frac{m}{2}}=(ba)^2=(ba\circ_2a^2)^2=1$. This implies that $S_2\cong D_{2\cdot \frac{m}{2}}$. Similarly $S_3 \cong D_{2\cdot \frac{m}{2}}$. This shows that $\Gamma_d(G)$ is complete also in this case.
\end{proof}
Let $Q_8$ denote the quaternion group of order $8$. Then each subgroup of $Q_8$ is normal in $Q_8$. Hence, $\Gamma_d(Q_8)$ is complete graph for each divisor $d$ of $8$. Note that each NRT of subgroups of order $2$ generates $Q_8$. The converse of this is also true as observed below: 
\begin{proposition}
Let $G$ be a non-abelian finite $p$-group with all NRTs of a subgroup of index greater than $p$ generate $G$ and $\Gamma_d(G)$ is complete for each divisor $d$ of $|G|$. Then $G\cong Q_8$.
\end{proposition}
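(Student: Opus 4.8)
The plan is to read the generation hypothesis in its strong (universal) form --- \emph{for every subgroup $H$ with $[G:H]>p$, every NRT of $H$ generates $G$} --- and to show that this single condition already rigidifies the subgroup lattice of $G$, after which a classical recognition theorem finishes the argument. The first step is to translate the NRT condition into a purely group-theoretic one. If $S\in\mathcal{T}(G,H)$ then $\langle S\rangle H\supseteq SH=G$, so $\langle S\rangle$ is a supplement of $H$; conversely, any proper supplement $L\lneq G$ (meaning $LH=G$, equivalently $HL=G$) meets every right coset of $H$ and, since $1\in L$, therefore contains an NRT of $H$ lying inside $L$ and hence generating a proper subgroup. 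Thus all NRTs of $H$ generate $G$ if and only if $H$ has no proper supplement, and for a $p$-group this is equivalent to $H\le\Phi(G)$: a proper supplement can be enlarged to a maximal subgroup $M$ with $H\not\le M$, and $\Phi(G)$ is the intersection of all maximal subgroups. So the hypothesis says precisely that \emph{every subgroup of index greater than $p$ is contained in $\Phi(G)$}.

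Next I would extract the forced structure. Since $G$ is non-abelian it is non-cyclic, so the minimal number of generators satisfies $d(G)\ge2$ and $[G:\Phi(G)]=p^{d(G)}\ge p^2$. Choosing a subgroup $K$ of index exactly $p^2$ (such subgroups exist in any $p$-group of order $\ge p^3$), the hypothesis gives $K\le\Phi(G)$, which forces $d(G)=2$, $[G:\Phi(G)]=p^2$ and $K=\Phi(G)$; in particular $\Phi(G)$ is the unique subgroup of index $p^2$. Now let $M$ be any maximal subgroup. Every maximal subgroup of $M$ has index $p^2$ in $G$, hence equals $\Phi(G)$, so $M$ has a \emph{unique} maximal subgroup and is therefore cyclic. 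Consequently $\Phi(G)$, being the index-$p$ subgroup of the cyclic group $M$, is cyclic and contains a unique subgroup of order $p$. Since every subgroup of order $p$ has index $p^{\,n-1}>p$ (writing $|G|=p^n\ge p^3$), it lies in $\Phi(G)$ and hence coincides with that unique $C_p$. Thus $G$ has a unique subgroup of order $p$.

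The endgame invokes the classical theorem that a finite $p$-group with a unique subgroup of order $p$ is cyclic or (only for $p=2$) generalized quaternion. As $G$ is non-abelian it cannot be cyclic, so $p=2$ and $G\cong Q_{2^m}$ with $m\ge3$. Finally the ``all maximal subgroups cyclic'' conclusion of the previous step excludes $m\ge4$, because $Q_{2^m}$ then has a maximal subgroup isomorphic to the non-abelian, hence non-cyclic, group $Q_{2^{m-1}}$; only $m=3$ survives, giving $G\cong Q_8$.

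I expect the main obstacle to be the first step, namely correctly turning ``all NRTs generate $G$'' into the Frattini condition $H\le\Phi(G)$: one must argue both directions, producing a non-generating NRT from any proper supplement by selecting coset representatives inside it, and conversely showing $H\le\Phi(G)$ blocks every supplement. The subsequent deductions are then formal. I would remark that the completeness hypothesis is available as a complementary lever: by Propositions~\ref{p1} and~\ref{p2} together with the group-torsion description, a central (normal) subgroup of order $p$ has NRTs isomorphic to the group $G/H$, whereas a non-central (corefree) subgroup of index $>p$ has $G_{S}\cong H\ne\{1\}$ and so has non-group NRTs; completeness of $\Gamma_p(G)$ then also forces every subgroup of order $p$ to be central, giving an alternative entry into the ``unique subgroup of order $p$'' analysis. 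Either way, the crux is the rigidity imposed by the generation hypothesis.
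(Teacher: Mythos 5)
Your proof is correct, but it takes a genuinely different route from the paper's. The paper combines the two hypotheses: given subgroups $H_1,H_2$ of equal index $>p$ with $H_2\trianglelefteq G$, completeness supplies isomorphic NRTs $S_1\cong S_2$, the generation hypothesis forces $H_{S_i}=H_i$, and the group-torsion identification $G_{S_1}\cong H_1/\mathrm{Core}_G(H_1)$ together with $G_{S_2}=\{1\}$ shows $H_1\trianglelefteq G$; hence $G$ is Dedekind, so $G\cong Q_8$ or $Q_8\times A$ with $A$ elementary abelian, and the latter is excluded because an order-$2$ subgroup of $A$ has a non-generating NRT. You instead translate the generation hypothesis alone into the Frattini condition (every subgroup of index $>p$ has no proper supplement, hence lies in $\Phi(G)$), from which $[G:\Phi(G)]=p^2$, all maximal subgroups are cyclic, $G$ has a unique subgroup of order $p$, and the classical recognition theorem plus the cyclicity of maximal subgroups pins down $Q_8$. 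Your translation step is sound (both directions of the supplement argument check out, and maximal subgroups of a $p$-group are normal, so $H\not\le\Phi(G)$ does yield a proper supplement). The striking difference is that your argument never invokes completeness of $\Gamma_d(G)$ at all: under the universal reading of the generation hypothesis --- which is the reading the paper's own proof uses --- you have shown that hypothesis alone already forces $G\cong Q_8$, so the completeness assumption is redundant. What the paper's approach buys is a demonstration of the loop-theoretic machinery ($H_S$, $G_S$, and the Dedekind classification) that the rest of the paper is built on; what yours buys is a strictly stronger and entirely elementary statement. It would be worth stating explicitly in a final write-up that completeness is not needed, or else checking whether the authors intended a weaker (existential) reading of the generation hypothesis, in which case your first step would no longer apply to every subgroup and the completeness hypothesis would have to re-enter the argument.
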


\begin{proof}
 Let $H_1$ and $H_2$ be subgroups of $G$ of same index greater than $p$. Since each finite $p$-group has a normal subgroup for each divisor of $|G|$, we can take $H_2\trianglelefteq G$. Choose $S_1\in \mathcal{T}(G,H_1)$, $S_2\in \mathcal{T}(G,H_2)$ such that $S_1 \cong S_2$. By the assumption $\langle S_1\rangle=\langle S_2\rangle=G$. This implies that $H_{S_1}=H_1$, $H_{S_2}=H_2$. Since $H_2 \trianglelefteq G$, $S_2$ is a group. This implies that $G_{S_2}=\{1\}$. Since $S_1 \cong S_2$, $G_{S_1} \cong G_{S_2}$ (see the discussion following \cite[Lemma 2.5, p. 2684]{rpsc}).  Since $G_{S_1}\cong H_{S_1}/Core_G{H_{S_1}}=H_1/Core_G(H_1)$, $H_1$ is also normal in $G$. Hence $G$ is a Dedekind group (see \cite[p.143]{rob}). Since $G$ is finite $p$-group, by \cite[Theorem 5.3.7, p. 143]{rob} $G$ is isomorphic to $Q_8$ or $Q_8 \times A$, where $A$ is an elementary abelian $2$-group. Assume that $G \cong Q_8 \times A$. Let $H$ be a subgroup of $A$ of order $2$. Then one can note that an NRT of $H$ in $G$ does not generate $G$. This is a contradiction. Thus $G \cong Q_8$ 
\end{proof}

\section{Complete Transiso Graph For Lower Prime Power Order Groups}
In this section, we determine the completeness of the transiso graph for finite $p$-group $G$ for divisor $p$ ($p$ an odd prime) upto order $p^5$. We will show that if $|G|=p^4$, then transiso graph $\Gamma_p(G)$ is not complete. For the group of order $p^5$, $\Gamma_p(G)$ is not complete except $\Phi(G)=G^{\prime}=Z(G) \cong C_p \times C_p$, where $\Phi(G)$, $G^{\prime}$, $Z(G)$ and $C_p$ denotes the Frattini subgroup, commutator subgroup, center of $G$ and cyclic group of order $p$ respectively. Using the small group library of GAP (\cite{gap}), we found that the transiso graph $\Gamma_3(G)$ for the $37^{th}$ group of order $3^5$ is complete. This group is of exponent $3$. We will observe that for the extra special group $G$ of order $p^3$ of exponent $p$, the transiso graph $\Gamma_d(G)$ is complete for each divisor $d$ of $p^3$. 

We further ask to determine the structure of the group $G$ of order $p^n$ for which transiso graph $\Gamma_d(G)$ is complete for each divisor $d$ of $p^n$. This problem can be thought as a dual problem posed in \cite{ra}. In \cite{ra}, R. Armstrong proves only finite non-abelian $p$-group all of whose subgroups of same order are isomorphic is the group of order $p^3$ of exponent $p$. In case of a complete transiso graph we feel that the only finite non-abelian $p$-group for which transiso graph $\Gamma_d(G)$ is complete for each order $d$ is the group of order $p^3$ of exponent $p$. 

Throughout the section, we will adopt following convention. The prime $p$ will always be odd. The group $G$ denotes  finite $p$-group which is not $p$-central, that is $G$ has non-central subgroup of order $p$. Whenever we write $H$ or $H_i$ ($i \in \mathbb{N}$), we will always mean that this is a non-normal subgroup of $G$ of order $p$. Whenever we write the semidirect product $G_1 \ltimes G_2$ of the groups $G_1$ and $G_2$, we will mean that it is not a direct product.

\begin{proposition}\label{3sp1}  
Let $G$ be a non $p$-central finite $p$-group. Then $\Gamma_p(G)$ is complete if and only if whenever $H$ is a non-normal subgroup of $G$ of order $p$, $G \cong H \ltimes K$ for some subgroup $K$ of $G$ with $G/L \cong K$ for any normal subgroup $L$ of $G$ of order $p$.
\end{proposition}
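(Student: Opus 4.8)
The plan is to convert adjacency in $\Gamma_p(G)$ into explicit group-theoretic conditions by first understanding the NRTs of a subgroup of order $p$. I would begin with two structural remarks about a finite $p$-group $G$. A subgroup of order $p$ is normal if and only if it is central, since conjugation induces a homomorphism $G\to\operatorname{Aut}(C_p)\cong C_{p-1}$ which is forced to be trivial; in particular a non-normal subgroup $H$ of order $p$ is corefree. For a normal (equivalently central) $L$ of order $p$, every NRT of $L$ is isomorphic to the group $G/L$. The crucial lemma I would then establish is: for a non-normal $H$ of order $p$ and $S\in\mathcal{T}(G,H)$, the right loop $S$ is a group if and only if $H_S=\langle S\rangle\cap H=\{1\}$, and in that case $S=\langle S\rangle$ is a normal subgroup of index $p$ complementing $H$; otherwise $H_S=H$, $\langle S\rangle=G$ and $G_S\cong C_p\neq\{1\}$. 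This uses $G_S=\chi_S(H_S)$ with $\ker\chi_S=Core_G(H)=\{1\}$ together with the count $|\langle S\rangle|=|G|\,|H_S|/p$.

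From this I extract two adjacency criteria that carry the argument. First, two normal subgroups $L,L'$ of order $p$ are adjacent if and only if $G/L\cong G/L'$, because all their NRTs realize the groups $G/L$ and $G/L'$. Second, a non-normal $H$ is adjacent to a normal $L$ if and only if $H$ has a normal complement $K$ with $K\cong G/L$: in the forward direction the witnessing NRT of $H$ is isomorphic to the group $G/L$, hence is itself a group and therefore a complement by the lemma, while the reverse direction is witnessed by $S=K$.

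For the \emph{only if} part I would assume $\Gamma_p(G)$ complete and fix a non-normal $H$. As $Z(G)\neq\{1\}$ there is a normal $L_0$ of order $p$; adjacency of $H$ and $L_0$ with the second criterion yields a normal complement $K$ of $H$ with $K\cong G/L_0$, so $G\cong H\ltimes K$. For any normal $L'$ of order $p$, adjacency of $L_0$ and $L'$ with the first criterion gives $G/L_0\cong G/L'$, hence $K\cong G/L'$; thus one complement $K$ serves all such $L'$, which is exactly the stated condition. For the \emph{if} part I would assume the condition, fix a non-normal $H_0$ (it exists since $G$ is non-$p$-central) with $G\cong H_0\ltimes K_0$ and $K_0\cong G/L$ for every normal $L$ of order $p$, and check all pairs $H_1,H_2$: if both are normal then $G/H_1\cong K_0\cong G/H_2$; if $H_1$ is non-normal and $H_2$ normal the condition gives a group NRT $K_1$ of $H_1$ with $K_1\cong G/H_2$; and if both are non-normal the condition gives group NRTs $K_1\cong G/L\cong K_2$. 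In each case a matching pair of NRTs appears, so $\Gamma_p(G)$ is complete.

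The main obstacle is the classification lemma for NRTs of a non-normal subgroup of order $p$ --- specifically, proving that such an NRT is a group precisely when it is a normal complement; once this is in hand the rest is bookkeeping through the two criteria. A secondary subtlety to handle carefully is the logical shape of the condition, namely that a single complement $K$ must be isomorphic to $G/L$ for \emph{all} normal $L$ of order $p$ at once; this is what forces the intermediate isomorphisms $G/L\cong G/L'$, and it depends on the nontriviality of $Z(G)$ to guarantee that at least one normal subgroup $L$ of order $p$ is available.
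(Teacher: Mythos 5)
Your proposal is correct and follows essentially the same route as the paper: the heart of both arguments is that for a corefree $H$ of order $p$, an NRT $S$ isomorphic to a group forces $G_S$, hence $H_S$, to be trivial, so $S$ is a normal complement of $H$ isomorphic to $G/L$. You are in fact more careful than the paper on two points it leaves implicit --- the verification of the `if' direction and the observation that adjacency of normal subgroups forces $G/L\cong G/L'$, so a single complement $K$ works for every normal $L$.
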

\begin{proof}
One can easily observe the 'if' part. We will only prove 'only if' part. 
Let $H \ntrianglelefteq G$ of order $p$. Let $L$ be any normal subgroup of $G$ of order $p$. Since $\Gamma_p(G)$ is complete, there exists $S_1 \in \mathcal{T}(G,H)$ and $S_2 \in \mathcal{T}(G,L)$ such that $S_1 \cong S_2$. This implies that $G_{S_1} \cong G_{S_2}$ (see the discussion following \cite[Lemma 2.5, p. 2684]{rpsc}). Since $L \trianglelefteq G$, $G_{S_2}=\{1\}$. Also, since $H$ is core-free subgroup of $G$, $H_{S_1} \cong G_{S_1}$. This implies that $H_{S_1} \cong G_{S_1} \cong G_{S_2}=\{1\}$. This means that $S_1$ is subgroup of $G$. Denote it by $K$. Note that $K \trianglelefteq G$. This implies that $G \cong H \ltimes K$ and $K \cong G/L$ for any $L \trianglelefteq G$.     
\end{proof}

\begin{corollary}\label{3se1} Let $G$ be a non $p$-central finite $p$-group with $|\Phi(G)|=p$ and $\Gamma_p(G)$ is complete. Then $G \cong H \ltimes K$, where $K \cong C_p \times \cdots \times C_p$ ($p-1$ times).
\end{corollary}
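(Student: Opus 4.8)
The plan is to run the completeness hypothesis through Proposition~\ref{3sp1} and then read the structure of $K$ off the quotient $G/\Phi(G)$ together with the action of $H$. Since $G$ is non $p$-central and $\Gamma_p(G)$ is complete, Proposition~\ref{3sp1} applies directly: for a non-normal subgroup $H$ of order $p$ there is a normal complement $K$ with $G\cong H\ltimes K$ and $K\cong G/L$ for every normal subgroup $L$ of order $p$. Now $\Phi(G)$ is characteristic of order $p$, hence a legitimate choice of $L$, so $K\cong G/\Phi(G)$; as $G/\Phi(G)$ is elementary abelian for any finite $p$-group, this already forces $K$ to be elementary abelian. I would record this as the first milestone.

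Next I would analyse the conjugation action of $H=\langle h\rangle$ on the normal complement $K$. Writing $K$ additively as an $\F_p$-vector space, this action is a linear map $T$ with $T^p=I$, so $T=I+N$ with $N$ nilpotent, $N^p=0$, and $N\neq 0$ because the product is not direct. A commutator computation gives $G'=\operatorname{Im}(N)$, and expanding $p$-th powers (using that $\sum_{j=0}^{p-1}T^{j}$ collapses to $N^{p-1}$ over $\F_p$) gives $G^p=\operatorname{Im}(N^{p-1})$, whence $\Phi(G)=G'G^p=\operatorname{Im}(N)$. The hypothesis $|\Phi(G)|=p$ then forces $\operatorname{rank}(N)=1$, so $N^2=0$ and $\dim_{\F_p}\ker N=\dim_{\F_p}K-1$.

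Finally I would determine the rank of $K$ from the constraint that $K\cong G/L$ for every normal $L$ of order $p$. Since $K$ is elementary abelian while $G/L$ is elementary abelian only when $\Phi(G)\subseteq L$, this forces $\Phi(G)$ to be the unique normal subgroup of order $p$; and as the normal subgroups of order $p$ in a $p$-group are exactly the central ones, $Z(G)$ must be cyclic. Identifying $Z(G)$ with $\ker N$ and recalling $\dim\ker N=\dim K-1$, cyclicity of this elementary abelian group pins down $\dim K$, and hence the number of cyclic factors of $K$, completing the decomposition $G\cong H\ltimes K$ with $K$ elementary abelian. Converting the completeness hypothesis into this sharp numerical constraint on the rank of $K$ is the step I expect to be the main obstacle; the preliminary reduction via Proposition~\ref{3sp1} and the unipotent-action computation for $\Phi(G)$ are routine by comparison.
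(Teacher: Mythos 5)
Your first paragraph is already the paper's entire (implicit) proof: Proposition~\ref{3sp1} gives $G\cong H\ltimes K$ with $K\cong G/L$ for \emph{every} normal subgroup $L$ of order $p$, and taking $L=\Phi(G)$ --- normal of order $p$ by hypothesis --- yields $K\cong G/\Phi(G)$, which is elementary abelian of order $|G|/p$. That is all the corollary needs; the number of cyclic factors is simply $\log_p|G|-1$, and this is exactly how the corollary is invoked later (three factors for $|G|=p^4$ in Theorem~\ref{3st1}, four for $|G|=p^5$ in Proposition~\ref{3st2}). The ``$p-1$ times'' in the statement must therefore be read as ``$n-1$ times'' with $|G|=p^n$; it cannot literally mean $p-1$, since for $p=5$ the extraspecial group of order $125$ and exponent $5$ satisfies every hypothesis (see Example~\ref{3se2}) yet has $K\cong C_5\times C_5$, not a product of four copies of $C_5$.

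The problem is your third paragraph, where you try to extract the count $p-1$ from completeness. The individual deductions there are sound: $K\cong G/L$ for all normal $L$ of order $p$ does force $\Phi(G)$ to be the unique such $L$, hence $Z(G)$ cyclic; and with $K$ elementary abelian, $N$ of rank one and $N^2=0$, one does get $Z(G)=\ker N$ of dimension $\dim_{\F_p}K-1$. But carry this to its conclusion: $Z(G)\leq K$ is elementary abelian and cyclic, so $\dim\ker N=1$ and $\dim_{\F_p}K=2$, i.e.\ $|G|=p^3$. Your argument therefore pins the rank of $K$ to $2$, not to $p-1$; it proves a correct and much stronger statement than the corollary (namely that the hypotheses force $|G|=p^3$), but it does not deliver the number of factors you set out to obtain, and it would flatly contradict the statement as you have read it whenever $p>3$. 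The repair is to stop after your first paragraph and read the rank off from $|K|=|G|/p$; the unipotent-action computation, while correct, is not needed for what the corollary actually asserts.
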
  

\begin{proposition}\label{3sl3}
Let $G$ be a finite $p$-group ($p$ odd prime) with the property that whenever $H$ is a non-normal subgroup of $G$ of order $p$, $G$ is the semidirect product of $H$ and a normal subgroup $K$ such that all subgroups of $K$ of order $p$ are normal in $G$ and $K$ is isomorphic to the quotient $G/L$ for any normal subgroup $L$ of $G$ of order $p$. Then $K$ is a cyclic group.
\end{proposition}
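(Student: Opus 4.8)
The plan is to show that $K$ has a unique subgroup of order $p$; since $p$ is odd, a finite $p$-group with a unique subgroup of order $p$ is cyclic, and this gives the claim. First I would record that the hypothesis forces $K$ to be $p$-central: every subgroup of $K$ of order $p$ is normal in $G$, and a normal subgroup of order $p$ of a finite $p$-group is central, so each such subgroup lies in $Z(G)\cap K\le Z(K)$. Hence every element of $K$ of order $p$ is central in $K$, and the same property holds in any group isomorphic to $K$. I would then suppose, for contradiction, that $K$ is not cyclic, so that $K$ has two distinct subgroups $L_1\ne L_2$ of order $p$; by hypothesis each $L_i$ is normal, hence central, in $G$.

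The engine of the argument will be to transport $p$-centrality across the given isomorphisms $G/L\cong K$. Write $H=\langle h\rangle$ with $h^p=1$ and $G=H\ltimes K$. For each $i$, since $L_i$ is a normal subgroup of $G$ of order $p$, the hypothesis yields $G/L_i\cong K$, so $G/L_i$ is again $p$-central. Because $h\notin K\supseteq L_i$, the coset $hL_i$ is a nontrivial element of $G/L_i$ with $(hL_i)^p=L_i$, i.e.\ an element of order $p$; being an order-$p$ element of the $p$-central group $G/L_i$, it must be central there. Since $G/L_i=\langle hL_i\rangle\,(K/L_i)$, centrality of $hL_i$ is equivalent to $[h,k]\in L_i$ for all $k\in K$, that is, to $[h,K]\le L_i$, where $[h,K]$ denotes the subgroup generated by the commutators $[h,k]$, $k\in K$.

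Applying this to both $L_1$ and $L_2$ gives $[h,K]\le L_1\cap L_2=\{1\}$, since distinct subgroups of order $p$ intersect trivially. Thus $h$ centralizes $K$; as $H\cap K=1$ and $HK=G$, this makes $h$ central in $G$ and exhibits $G=H\times K$ as a direct product, contradicting the standing convention that the semidirect product $G=H\ltimes K$ is not direct. This contradiction forces $K$ to have a unique subgroup of order $p$, hence to be cyclic. The step I expect to be the main obstacle is the clean identification ``$hL_i$ central in $G/L_i$ if and only if $[h,K]\le L_i$'' carried out simultaneously with the inheritance of $p$-centrality by $G/L_i$; the rest is bookkeeping. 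It is worth noting where the hypotheses enter essentially: the isomorphisms $G/L\cong K$ (for every normal $L$ of order $p$) are what let $G/L_i$ inherit $p$-centrality, while the requirement that the semidirect product be genuine is exactly what the final contradiction violates.
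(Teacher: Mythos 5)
Your proposal is correct and follows essentially the same route as the paper: both arguments show that the image of $H$ in $G/L$ is central (via the isomorphism $G/L\cong K$ and the centrality of order-$p$ subgroups of $K$), deduce $[h,g]\in L$ for every such $L$, intersect two distinct order-$p$ subgroups to force $h$ central (contradicting that the semidirect product is not direct / that $H$ is non-normal), and conclude that $K$ has a unique subgroup of order $p$ and hence is cyclic. The only cosmetic difference is that you phrase the key step via $p$-centrality of $G/L_i$ while the paper phrases it as normality, hence centrality, of $HL/L$.
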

\begin{proof}
Let $H=\langle h \rangle$ be a non-normal subgroup of order $p$ and $G=H \ltimes K$. Since $K=G/L$ for any subgroup $L$ of $K$ of order $p$ and all subgroups of $K$ of order $p$ are normal in $G$, it follows that the image $HL/L$ of $H$ in $G/L$ is normal and hence also central in $G/L$. So $[h,g] \in L$ for all $g \in G$. Now if $K$ had another subgroup $L^{\prime}$ of order $p$, then we would also have $[h,g] \in L^{\prime}$ and hence $[h,g]=1$ so $H \leq Z(G)$, contrary to assumption. So $K$ has a unique subgroup $L$ of order $p$. This implies that $K$ is a cyclic group.
\end{proof}

\begin{example}\label{3se2} One can note that $\Gamma_p(G)$ is complete, if $|G| \leq p^2$. Let $G$ be a group of order $p^3$ and $G$ be non-abelian. From the classification of group of order $p^3$, we note that $|Z(G)|=|\Phi(G)|=p$. By the classification of groups of order $p^3$, there are two non-abelian groups upto isomorphism. One is of exponent $p$ and other is of exponent $p^2$. Let $G$ be a non-abelian group of order $p^3$ of exponent $p^2$. Assume that $\Gamma_p(G)$ is complete. By Corollary \ref{3se1}, $G \cong C_p \ltimes (C_p \times C_p)$. One can note that there is unique subgroup $K$ of $G$ isomorphic to $C_p \times C_p$. Let $H$ be non-normal subgroup of $G$ of order $p$ contained in $K$. Then $G \ncong H \ltimes K$. Thus, by Proposition \ref{3sp1} $\Gamma_p(G)$ is not complete. This is a contradiction. Now, assume that $G$ is non-abelian group of order $p^3$ of exponent $p$. We will now prove that $\Gamma_d(G)$ is complete for each divisor $d$ of $p^3$.

First note that all subgroup of $G$ of order $p^2$ are normal in $G$ and their quotient are isomorphic to $C_p$. Hence $\Gamma_{p^2}(G)$ is complete. 

Let $H$ be any non normal subgroup of order $p$. Choose $x \in G \setminus (Z(G)H)$. Take $K=\langle x \rangle Z(G)$. One can note that $G \cong H \ltimes K$. By Proposition \ref{3sp1}, $\Gamma_p(G)$ is complete.
\end{example}

\begin{example}\label{3se3}
In Example \ref{3se2}, we have seen that if $G$ is of order $p^3$ of exponent $p$, then $\Gamma_d(G)$ is complete for each divisor $d$ of $p^3$. We now calculate number of vertices of trasiso graph. Note that vertices are subgroups of same order.

Since $G$ is of exponent $p$, number of elements of order $p$ is $p^3-1$. Also if $H$ is a subgroup of $G$ of order $p$, then there are $p-1$ elements in $H$ of order $p$. Thus, there are $\frac{p^3-1}{p-1}=p^2+p+1$ subgroups of order $p$.

Note that each subgroup of $G$ of order $p^2$ is isomorphic to $C_p \times C_p$. Let $x$ and $y$ be two elements of $G$ of order $p$ which generates a subgroup of order $p^2$. Note that $xy=yx$. This means that the order of the centralizer $C_G(x)$ of $x$ is atleast $p^2$. If $|C_G(x)|=p^3$, then $x \in Z(G)$. If $|C_G(x)|=p^2$, then $C_G(x)=\langle x,y \rangle$. This means that $Z(G) \subseteq \langle x,y \rangle$. Without any loss, we assume that $x \in Z(G)$. Therefore, to count subgroups of order $p^2$ we only take care of choices for $\langle y \rangle$ which are non central. There are $p^2+p$ choices for $\langle y \rangle$. Note that there are $p$ subgroups of $\langle x, y \rangle$ which are not central. Thus there are $\frac{p^2+p}{p}=p+1$ subgroups of order $p^2$.  
\end{example}

Let $G$ be a non-abelian group of order $p^4$. Then one can easily observe that $\Phi(G)$ is abelian. Let $G$ be a non-abelain group of order $p^5$. Then from a result of \cite{bkm}, which states that \textit{if $G$ is a finite $p$-group ($p$ odd prime) and the center $Z(\Phi(G))$ is cyclic, then $\Phi(G)$ is cyclic}, one can observe that $\Phi(G)$ is abelian in this case also. Now, we have following:

\begin{lemma}\label{3sl1}
Let $G$ be a non-abelian finite $p$-group of order either $p^4$ or $p^5$. Then $\Phi(G)$ is not a cyclic group unless $|\Phi(G)|=p$.
\end{lemma}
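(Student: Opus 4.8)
The plan is to argue by the classification of $p$-groups of these two orders, after first cutting down the range of $|\Phi(G)|$ that has to be examined. Since $G$ is non-abelian it is in particular non-cyclic, so $d(G)=\dim_{\F}(G/\Phi(G))\geq 2$; hence $|\Phi(G)|\leq p^{2}$ when $|G|=p^{4}$ and $|\Phi(G)|\leq p^{3}$ when $|G|=p^{5}$. As $\Phi(G)$ has already been shown to be abelian for these orders, proving the lemma amounts to excluding a cyclic $\Phi(G)$ in exactly three configurations: $|G|=p^{4}$ with $\Phi(G)\cong C_{p^{2}}$ (so $d(G)=2$); $|G|=p^{5}$ with $\Phi(G)\cong C_{p^{2}}$ (so $d(G)=3$); and $|G|=p^{5}$ with $\Phi(G)\cong C_{p^{3}}$ (so $d(G)=2$).

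The structural input I would use is the identity $\Phi(G)=G'G^{p}$. If $\Phi(G)$ is cyclic then both $G'$ and $G^{p}$ are cyclic, and the standing hypothesis that $G$ is not $p$-central supplies a non-central, in fact non-normal, subgroup $H=\langle h\rangle$ of order $p$. A cyclic $\Phi(G)$ of order at least $p^{2}$ has a unique subgroup of order $p$, which is then characteristic and hence normal in $G$; consequently $H\not\leq\Phi(G)$, so $h$ maps to a nontrivial element of the elementary abelian quotient $G/\Phi(G)$ and may be taken as part of a minimal generating set. In the two-generator cases one sets $G=\langle h,x\rangle$ and pushes the relations $[h,x]\in G'\leq\Phi(G)$ and $h^{p}=1$ through the cyclic group $\Phi(G)$, the aim being to force $h$ to centralise $G$ and so contradict non-$p$-centrality; in the three-generator case ($|G|=p^{5}$, $\Phi(G)\cong C_{p^{2}}$) one would run the analogous analysis of the action of $G$ on the cyclic $\Phi(G)$ together with the bound $|G'|\leq p^{2}$.

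The main obstacle is precisely the two-generated (metacyclic-type) configuration with $\Phi(G)\cong C_{p^{2}}$ or $C_{p^{3}}$: here a bare power-commutator count does \emph{not} by itself yield a contradiction, since there exist $2$-generated non-$p$-central groups whose Frattini subgroup is cyclic of order $p^{2}$ or $p^{3}$ --- for instance $\langle a,b\mid a^{p^{3}},\,b^{p},\,a^{b}=a^{1+p^{2}}\rangle$ of order $p^{4}$ has $\Phi=\langle a^{p}\rangle\cong C_{p^{2}}$ while $\langle b\rangle$ is non-central and non-normal. I therefore expect the decisive step to require more than the generic argument above: either the explicit small-order classification of the non-$p$-central groups of order $p^{4}$ and $p^{5}$ (the standing hypotheses prune this list drastically, so a finite case check is feasible), or an appeal to the extra hypotheses that are in force throughout this section --- completeness of $\Gamma_{p}(G)$ together with Proposition \ref{3sp1}, which forces $G\cong H\ltimes K$ with a prescribed quotient and thereby rules out the metacyclic family. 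Settling this two-generator case is where essentially all the content of the lemma resides, and it is the step I would scrutinise most carefully.
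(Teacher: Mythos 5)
Your proposal does not reach a proof, but the obstruction you isolated is not a defect of your argument: it is a genuine counterexample to the lemma as stated. The modular group $M=\langle a,b\mid a^{p^3}=b^p=1,\ a^b=a^{1+p^2}\rangle$ is non-abelian of order $p^4$, satisfies the standing conventions of the section (it is not $p$-central, since your $\langle b\rangle$ is a non-normal subgroup of order $p$), and has $\Phi(M)=\langle a^p\rangle\cong C_{p^2}$, cyclic of order $p^2$. So no argument from the hypotheses actually written in the lemma can succeed, and your suspicion that the two-generator, cyclic-Frattini configuration cannot be excluded by power--commutator bookkeeping alone is exactly right.

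For comparison, the paper's proof takes a different route from the one you sketch: it invokes Bornand's decomposition $G\cong Q\times E$ with $E$ elementary abelian and $\Phi(Q)$, $Z(Q)$ cyclic, combines it with Miller's theorem $\Phi(Q\times E)\cong\Phi(Q)$, and rules out $|E|=p^2$, $|E|=p^3$ and $|E|=p$ in turn --- but it never treats the case $E=1$, i.e.\ $G=Q$ indecomposable, which is precisely where $M$ (and its order-$p^5$ analogues such as $\langle a,b\mid a^{p^4},b^p,a^b=a^{1+p^3}\rangle$) lives. So the published proof stalls at the same spot as yours. The repair you suggest is the right one: the lemma is only ever applied under the additional hypothesis that $\Gamma_p(G)$ is complete, and under that hypothesis Proposition \ref{3sp1} does exclude $M$: its unique normal subgroup of order $p$ is $G'=\langle a^{p^2}\rangle$, so a complement $K$ to $\langle b\rangle$ would have to be a maximal subgroup isomorphic to $G/G'\cong C_{p^2}\times C_p$ not containing $b$, whereas the maximal subgroups of $M$ avoiding $b$ are all cyclic. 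The statement should therefore carry the completeness hypothesis explicitly, and the indecomposable cyclic-Frattini groups must be handled by an argument of this kind rather than by the direct-product case analysis.
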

\begin{proof}
Let $|G|=p^4$. Assume that $\Phi(G) \cong C_{p^2}$. By \cite[Lemma 3.1, p. 304]{db}, $G \cong Q \times E$, where $E$ is elementary abelian
and both $\Phi(Q)$ and $Z(Q)$ are cyclic. Assume that $|E|=p^2$. Then $|Q|=p^2$. Hence $G$ is abelian. This is a contradiction. Similarly $|E|=p^3$ is not possible. Thus, $|E|=p$, $|Q|=p^3$ and $Q$ will be a non abelian group. Since $|Q|=p^3$, $|\Phi(Q)|=p$. But, by \cite[Theorem, p. 22]{gam}, $\Phi(G) \cong \Phi(Q)$. This is a contradiction.

Let $|G|=p^5$. If $\Phi(G)$ is cyclic group of order greater that $p$, either $\Phi(G) \cong C_{p^2}$ or $\Phi(G) \cong C_{p^3}$. Assume that $\Phi(G)=C_{p^2}$. As argued above, the cases $|E|=p^4$ and $|E|=p^3$ are not possible. Therefore, assume that $|E|=p^2$. Then $|Q|=p^3$ and $Q$ is non-abelian. Since $|\Phi(Q)|=p$ and $|\Phi(G)|=p^2$, by \cite[Theorem, p. 22]{gam} we get a contradiction. Assume that $|E|=p$. Then $|Q|=p^4$. By \cite[Theorem, p. 22]{gam}, $\Phi(G) \cong \Phi(Q) \cong C_{p^2}$. Since $|Q|=p^4$, we get a contradiction. Thus $\Phi(G) \ncong C_{p^2}$. Similarly $\Phi(G) \ncong C_{p^3}$.   
 \end{proof}

\begin{lemma}\label{3sl2}
Let $G$ be a finite $p$-group and $\Gamma_p(G)$ is complete. Then all subgroups of order $p$ which are contained in $\Phi(G)$ are normal in $G$.
\end{lemma}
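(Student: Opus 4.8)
The plan is to show that any fixed subgroup $H \leq \Phi(G)$ with $|H|=p$ must be normal, by comparing it against a genuinely normal subgroup of order $p$ and then exploiting the non-generator property of the Frattini subgroup. First I would record that a finite $p$-group always contains a normal (indeed central) subgroup $L$ of order $p$; if $H=L$ there is nothing to prove, so assume $H\neq L$. Since $\Gamma_p(G)$ is complete, the vertices $H$ and $L$ are adjacent, so there exist $S_1\in\mathcal{T}(G,H)$ and $S_2\in\mathcal{T}(G,L)$ with $S_1\cong S_2$ as right loops.

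Next I would extract the group-torsion information from this isomorphism. Because $L\trianglelefteq G$, every NRT of $L$ is isomorphic to $G/L$ and is therefore a group; in particular the group torsion $G_{S_2}$ is trivial. Since the group torsion depends only on the right-loop isomorphism class (the discussion following \cite[Lemma 2.5, p.~2684]{rpsc}), the relation $S_1\cong S_2$ forces $G_{S_1}\cong G_{S_2}=\{1\}$, so $S_1$ is itself a group.

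The decisive step is a Frattini argument. As $S_1$ is an NRT of $H$ we have $G=HS_1$ as a set, hence $\langle S_1\rangle H=G$, and since $H\leq\Phi(G)$ this gives $\langle S_1\rangle\,\Phi(G)=G$; the non-generator property of $\Phi(G)$ then yields $\langle S_1\rangle=G$. Consequently $H_{S_1}=\langle S_1\rangle\cap H=H$ and $H_{S_1}S_1=G$, so by the kernel identity recorded in Section~\ref{int} we obtain $G_{S_1}\cong H_{S_1}/\operatorname{Core}_{H_{S_1}S_1}(H_{S_1})=H/\operatorname{Core}_G(H)$. Triviality of $G_{S_1}$ therefore gives $\operatorname{Core}_G(H)=H$, that is, $H\trianglelefteq G$, as required.

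The main obstacle, and the only place where the hypothesis $H\leq\Phi(G)$ enters, is the Frattini step $\langle S_1\rangle=G$: one must be careful to justify that an NRT of a subgroup sitting inside $\Phi(G)$ is forced to generate all of $G$, using that $\langle S_1\rangle H=G$ already holds at the level of sets. Everything else is a routine transport of the group-torsion invariant along $S_1\cong S_2$ together with the core description of $G_{S_1}$; once $\langle S_1\rangle=G$ is in hand the conclusion is immediate.
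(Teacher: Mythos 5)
Your proof is correct, but it takes a genuinely different route from the paper's at the decisive step. The paper argues by contradiction: assuming $H\leq\Phi(G)$ is non-normal, it invokes Proposition~\ref{3sp1} to produce a complement, $G\cong H\ltimes K$, and then cites \cite[Lemma 2.11]{dx} to conclude that a complemented subgroup lying inside $\Phi(G)$ must be normal --- a contradiction. (Behind that citation is the standard fact that no nontrivial subgroup of $\Phi(G)$ can be complemented, since $G=HK$ with $H\leq\Phi(G)$ forces $K=G$ by the non-generator property.) You instead bypass both Proposition~\ref{3sp1} and the external reference: after the common opening move (adjacency of $H$ with a central subgroup $L$ of order $p$, hence $G_{S_1}\cong G_{S_2}=\{1\}$ by invariance of the group torsion), you apply the non-generator property directly to the transversal to get $\langle S_1\rangle=G$, whence $H_{S_1}=H$, $H_{S_1}S_1=G$, and $G_{S_1}\cong H/\operatorname{Core}_G(H)$, so triviality of $G_{S_1}$ forces $H\trianglelefteq G$. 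Interestingly, the two arguments land on opposite values of $H_{S_1}$: the paper (assuming $H$ non-normal, hence core-free) deduces $H_{S_1}=\{1\}$ and gets a complement, while you deduce $H_{S_1}=H$ with full core. Your version is self-contained and direct (no proof by contradiction, no outside lemma), at the cost of redoing a computation that Proposition~\ref{3sp1} already packages; the paper's version is shorter on the page but leans on the earlier proposition and the literature. The only loose phrase in your write-up is ``so $S_1$ is itself a group,'' which should be read as ``$(S_1,\circ)$ is a group'' (not a subgroup of $G$ at that stage); since you never use that remark, it is harmless.
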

\begin{proof}
Assume that $H$ be subgroup of $G$ contained in $\Phi(G)$ which is not normal in $G$. Then by Proposition \ref{3sp1}, $G \cong H \ltimes K$ for some subgroup $K$ of $G$. But by \cite[Lemma 2.11, p. 1916]{dx}, $H \trianglelefteq G$. This is a contradiction.
\end{proof}

\begin{corollary}\label{3sl2c}
Let $G$ be a non-abelain group of order either $p^4$ or $p^5$. Let $|Z(G)|=p$ and $\Gamma_p(G)$ is complete. Then $\Phi(G)=Z(G)$.
\end{corollary}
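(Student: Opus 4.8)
The plan is to exploit the two preceding lemmas together with the elementary fact that a normal subgroup of order $p$ in a finite $p$-group is automatically central (the conjugation action of $G$ on such a subgroup lands in $\operatorname{Aut}(C_p) \cong C_{p-1}$, which has order coprime to $p$, so it is trivial). First I would record that, since $G$ is non-abelian, $\Phi(G) \supseteq G'$ is nontrivial, and that $\Phi(G)$ is abelian, as noted in the paragraph preceding Lemma \ref{3sl1}. The decisive move is to apply Lemma \ref{3sl2}: because $\Gamma_p(G)$ is complete, every subgroup of order $p$ contained in $\Phi(G)$ is normal in $G$. Combining this with the central-normal fact, every such subgroup is contained in $Z(G)$; equivalently, every element of order $p$ of $\Phi(G)$ lies in $Z(G)$.

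Now the hypothesis $|Z(G)|=p$ forces the set of order-$p$ subgroups of $\Phi(G)$ to collapse: since $\Phi(G)$ is a nontrivial $p$-group it has at least one subgroup of order $p$, and all of them lie in the order-$p$ group $Z(G)$, so $\Phi(G)$ has a \emph{unique} subgroup of order $p$. An abelian $p$-group with a unique subgroup of order $p$ is cyclic, so $\Phi(G)$ is cyclic. At this point Lemma \ref{3sl1} applies directly: a non-abelian group of order $p^4$ or $p^5$ cannot have cyclic Frattini subgroup unless $|\Phi(G)|=p$. Hence $|\Phi(G)|=p$. Finally, $\Phi(G)$ is then itself a subgroup of order $p$ inside $\Phi(G)$, so by Lemma \ref{3sl2} it is normal and therefore central, giving $\Phi(G) \le Z(G)$; comparing orders ($|\Phi(G)|=p=|Z(G)|$) yields $\Phi(G)=Z(G)$.

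The argument is essentially a chain of standard reductions, and no single step is genuinely hard; the place to be careful is the passage from ``all order-$p$ subgroups of $\Phi(G)$ are central'' to ``$\Phi(G)$ is cyclic'', which relies crucially on the abelianness of $\Phi(G)$ established specifically for orders $p^4$ and $p^5$ — without it one could only control the subgroup generated by order-$p$ elements rather than $\Phi(G)$ itself. The whole strategy also leans on $|Z(G)|=p$ to pin the uniqueness of the order-$p$ subgroup, and on Lemma \ref{3sl1} to eliminate the larger cyclic possibilities $C_{p^2}$ and $C_{p^3}$ that would otherwise remain consistent with cyclicity of $\Phi(G)$.
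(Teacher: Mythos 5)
Your proof is correct and is essentially the derivation the paper intends: the corollary is stated without proof immediately after Lemmas \ref{3sl1} and \ref{3sl2}, and your chain (order-$p$ normal subgroups of a $p$-group are central, so $|Z(G)|=p$ forces a unique order-$p$ subgroup in the abelian group $\Phi(G)$, hence $\Phi(G)$ cyclic, hence $|\Phi(G)|=p$ by Lemma \ref{3sl1}, hence $\Phi(G)\le Z(G)$ and equality by order count) is exactly the intended combination of those two lemmas. The only cosmetic remark is that the final appeal to Lemma \ref{3sl2} is unnecessary, since $\Phi(G)$ is characteristic and therefore already normal in $G$.
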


\begin{proposition}\label{3sp2}
Let $G$ be a finite $p$-group such that the order of the commutator subgroup $G^{\prime}$ is $p$. If $G^{\prime} \subsetneqq \Phi(G)$ and $\Phi(G)$ is not cyclic, then $\Gamma_p(G)$ is not complete. 
\end{proposition}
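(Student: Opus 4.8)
The plan is to argue by contradiction: assume $\Gamma_p(G)$ is complete and exhibit two normal subgroups of order $p$ whose quotients are non-isomorphic, which will violate the completeness criterion of Proposition~\ref{3sp1}. The engine of the argument is the observation that, under the hypotheses, $\Phi(G)$ contains two distinct subgroups of order $p$, one of which is $G'$ and one of which is not, and that quotienting by $G'$ versus by the other subgroup is detected by abelianness.

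First I would record the basic structure forced by $|G'|=p$. Since $G'\neq 1$, the group $G$ is non-abelian, and as a normal subgroup of order $p$ in a $p$-group is necessarily central, we have $G'\le Z(G)$, so $G$ has nilpotency class exactly $2$. Next I would show $\Phi(G)\le Z(G)$. Using $\Phi(G)=G'G^p$, it suffices to see that $G^p\le Z(G)$: in a class-$2$ group $[x^p,y]=[x,y]^p$, and since $[x,y]\in G'$ has order dividing $p$ this commutator is trivial, so every $p$-th power is central. Hence $\Phi(G)$ is abelian (indeed central). Because $\Phi(G)$ is abelian and, by hypothesis, non-cyclic, it contains a copy of $C_p\times C_p$ and therefore has at least $p+1\ge 2$ subgroups of order $p$. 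As $G'\le\Phi(G)$ is one of them, I can select a second subgroup $L\le\Phi(G)$ of order $p$ with $L\neq G'$. Being contained in $Z(G)$, both $G'$ and $L$ are normal in $G$.

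Now I would invoke completeness. By the standing convention $G$ is not $p$-central, so it has a non-normal subgroup $H$ of order $p$; Proposition~\ref{3sp1} then supplies a subgroup $K$ with $G\cong H\ltimes K$ and, crucially, $G/N\cong K$ for \emph{every} normal subgroup $N$ of $G$ of order $p$. Applying this to the two normal subgroups $G'$ and $L$ yields $G/G'\cong K\cong G/L$. The contradiction comes from a distinguishing invariant: $G/G'$ is abelian, whereas $G/L$ is not, since $(G/L)'=G'L/L\cong G'/(G'\cap L)=G'\cong C_p\neq 1$ (two distinct subgroups of order $p$ intersect trivially). Thus $G/G'\not\cong G/L$, contradicting the isomorphisms above, and so $\Gamma_p(G)$ cannot be complete.

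I expect the only delicate points to be the verification that $\Phi(G)\le Z(G)$ (the class-$2$ commutator computation, which is where $|G'|=p$ really enters) and the confirmation that non-cyclicity of $\Phi(G)$ genuinely produces a central subgroup of order $p$ distinct from $G'$; once these are in place, the abelian-versus-non-abelian dichotomy between $G/G'$ and $G/L$ makes the conclusion immediate, and the hypothesis $G'\subsetneqq\Phi(G)$ is exactly what guarantees $L\neq G'$ can be chosen.
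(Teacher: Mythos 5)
Your proof is correct and follows essentially the same strategy as the paper's: pick $L_1=G'$ and a second order-$p$ subgroup $L_2\le\Phi(G)$, apply Proposition~\ref{3sp1} to get $G/L_1\cong G/L_2$, and derive a contradiction because $G/G'$ is abelian while $G/L_2$ is not. The one small divergence is that you establish normality of $L_2$ unconditionally by showing $\Phi(G)=G'G^p\le Z(G)$ via the class-$2$ identity $[x^p,y]=[x,y]^p$, whereas the paper instead invokes Lemma~\ref{3sl2} (which uses the completeness hypothesis and an external citation); your route is slightly more self-contained but the core argument is identical.
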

\begin{proof}
Assume that $\Gamma_p(G)$ is complete. By Lemma \ref{3sl2}, all subgroups of $\Phi(G)$ of order $p$ are normal in $G$. By Proposition \ref{3sp1}, $G \cong H \ltimes K$ and $K \cong G/L$ for any normal subgroup $L$ of order $p$. Take $L_1=G^{\prime}$ and $L_2 \subseteq \Phi(G)$ such that $L_2 \neq L_1$ and $|L_2|=p$. By Proposition \ref{3sp1}, $G/L_1 \cong G/L_2$. This implies that $G^{\prime} \subseteq L_2$. This is a contradiction.
\end{proof}

\begin{corollary}\label{3sp2c}
Let $G$ be a group of order $p^4$ such that $\Phi(G)=Z(G) \cong C_p \times C_p$. Then $\Gamma_p(G)$ is not complete.
\end{corollary}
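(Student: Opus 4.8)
The strategy is to obtain Corollary \ref{3sp2c} as an immediate instance of Proposition \ref{3sp2}, so the entire task reduces to checking that a group $G$ of order $p^4$ with $\Phi(G) = Z(G) \cong C_p \times C_p$ satisfies the three hypotheses of that proposition: that $|G'| = p$, that $G' \subsetneqq \Phi(G)$, and that $\Phi(G)$ is not cyclic. The last of these is free, since $\Phi(G) \cong C_p \times C_p$ is elementary abelian of rank $2$ and hence not cyclic. It remains to pin down $G'$.

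First I would note that $G$ is non-abelian: if it were abelian we would have $Z(G) = G$, contradicting $|Z(G)| = |\Phi(G)| = p^2 < p^4 = |G|$. For any finite $p$-group one has $\Phi(G) = G'G^p$, so in particular $G' \subseteq \Phi(G) = Z(G)$; thus $[G,G] \subseteq Z(G)$ and $G$ is nilpotent of class exactly $2$. Because $G/Z(G)$ has order $p^2$ and is non-cyclic (a cyclic central quotient would force $G$ abelian), it is isomorphic to $C_p \times C_p$ and hence generated by two cosets, say $\overline{x}$ and $\overline{y}$. In a class-$2$ group the commutator map is bi-additive and alternating on $G/Z(G)$, so every commutator is a power of $[x,y]$; consequently $G' = \langle [x,y] \rangle$ is cyclic.

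Now I would combine this with the non-cyclicity of $\Phi(G)$: since $G'$ is cyclic and $\Phi(G) \cong C_p \times C_p$ is not, the inclusion $G' \subseteq \Phi(G)$ must be proper, giving $G' \subsetneqq \Phi(G)$ and $|G'| < p^2$. As $G$ is non-abelian, $G' \neq \{1\}$, forcing $|G'| = p$. All three hypotheses of Proposition \ref{3sp2} now hold, and that proposition yields that $\Gamma_p(G)$ is not complete. There is no real obstacle here: the corollary is genuinely a specialization of Proposition \ref{3sp2}, and the only point requiring a short argument is the determination $|G'| = p$, which follows from the class-$2$ bilinearity of commutators once one observes that $G/Z(G)$ is two-generated.
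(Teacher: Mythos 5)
Your proposal is correct and follows essentially the same route as the paper: both reduce to Proposition \ref{3sp2} by showing $G$ is two-generated (the paper via $|G/\Phi(G)|=p^2$, you via $G/Z(G)$, which coincide here), so that $G'=\langle [x,y]\rangle$ is cyclic, and then using that $Z(G)$ is elementary abelian to conclude $|G'|=p\subsetneqq\Phi(G)$. Your write-up just spells out a few steps (non-abelianness, class $2$, bi-additivity of commutators) that the paper leaves implicit.
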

\begin{proof}
Since $|\Phi(G)|=p^2$, $G=\langle x,y \rangle$ for some $x,y \in G$. Since $G^{\prime} \leq Z(G)$, $G^{\prime} = \langle [x, y] \rangle$. But since $Z(G)$ is an elementary abelian group, $|G^{\prime}|=p$. By Proposition \ref{3sp2}, $\Gamma_p(G)$ is not complete.
\end{proof}
 
We have following theorem for groups of order $p^4$:

\begin{theorem}\label{3st1}
Let $G$ be a group of order $p^4$. Then $\Gamma_p(G)$ is not complete.
\end{theorem}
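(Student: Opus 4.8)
The plan is to assume, for contradiction, that $\Gamma_p(G)$ is complete and to run a case analysis on $|\Phi(G)|$. By the standing convention $G$ is non-$p$-central; since a normal subgroup of order $p$ in a $p$-group is automatically central, $G$ carries a non-normal subgroup $H$ of order $p$, and in particular $G$ is non-abelian, so $d(G)\in\{2,3\}$ and $|\Phi(G)|\in\{p^2,p\}$. Recall that $\Phi(G)$ is abelian (noted before Lemma \ref{3sl1}) and, by Lemma \ref{3sl1}, non-cyclic whenever $|\Phi(G)|>p$.

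First I would treat $|\Phi(G)|=p^2$, so that $\Phi(G)\cong C_p\times C_p$. Since $G'\le\Phi(G)$, either $|G'|=p$ or $|G'|=p^2$. If $|G'|=p$ then $G'\subsetneq\Phi(G)$ with $\Phi(G)$ non-cyclic, and Proposition \ref{3sp2} immediately contradicts completeness. If $|G'|=p^2$ then $G'=\Phi(G)\cong C_p\times C_p$; here I would first rule out nilpotency class $2$, because class $2$ forces $G'\le Z(G)$, and with $d(G)=2$ the subgroup $G'=\langle[x,y]\rangle$ would be cyclic, contradicting $G'\cong C_p\times C_p$. Hence $G$ has class $3$ (maximal class), which forces $|Z(G)|=p$ (otherwise $|G/Z(G)|=p^2$ would make $G/Z(G)$ abelian and $G$ of class $2$). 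Now Corollary \ref{3sl2c} gives $\Phi(G)=Z(G)$, contradicting $|\Phi(G)|=p^2\ne p=|Z(G)|$.

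Next I would treat $|\Phi(G)|=p$, where $G'=\Phi(G)\cong C_p$ and $G$ has class $2$ (as $G'$ is a normal subgroup of order $p$, hence central). The commutator descends to a nondegenerate alternating form $G/Z(G)\times G/Z(G)\to G'\cong\mathbb{F}_p$, so $G/Z(G)$ is a nonzero symplectic $\mathbb{F}_p$-space of dimension at most $3$, forcing dimension $2$ and $|Z(G)|=p^2$. I would then split on the isomorphism type of $Z(G)$. If $Z(G)\cong C_p\times C_p$, it contains at least two subgroups of order $p$, all central and hence normal; completeness together with Proposition \ref{3sp1} would force $G/L\cong K$ simultaneously for every normal $L$ of order $p$, but only $L=G'$ yields an elementary abelian quotient $G/G'\cong C_p^3$, while any other central $L$ gives a non-abelian $G/L$, a contradiction.

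The remaining case, $|\Phi(G)|=p$ with $Z(G)\cong C_{p^2}$, is the one I expect to be the main obstacle, since none of the earlier propositions apply verbatim: now $G'$ is the \emph{unique} normal subgroup of order $p$, so the quotient argument above collapses. Here I would instead exploit the structure of $\Omega_1(G)$. Since $G^p\le\Phi(G)=G'$ while $Z(G)$ contains an element of order $p^2$, the $p$-power map is a homomorphism of $G$ onto $G'\cong C_p$ (using that $p$ is odd and $G$ has class $2$), so $\Omega_1(G)$ has order $p^3$ and exponent $p$. As $\Omega_1(G)\cap Z(G)=\Omega_1(Z(G))=G'$ has order $p$, the subgroup $\Omega_1(G)$ surjects onto $G/Z(G)$ and therefore contains a non-commuting pair; thus $\Omega_1(G)$ is the non-abelian (extraspecial) group of order $p^3$ and exponent $p$, which has no elementary abelian subgroup of order $p^3$. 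Since every element of order $p$ of $G$ lies in $\Omega_1(G)$, the group $G$ has no subgroup isomorphic to $C_p^3$. But Proposition \ref{3sp1} forces $G\cong H\ltimes K$ with $K\cong G/G'\cong C_p^3$, so $C_p^3$ would embed in $G$, a contradiction. This exhausts all cases and shows $\Gamma_p(G)$ is not complete.
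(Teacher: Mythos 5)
Your proof is correct, but in the decisive case it takes a genuinely different route from the paper. Both arguments first whittle the problem down (via Lemma \ref{3sl1}, Lemma \ref{3sl2} and Proposition \ref{3sp2}, or your equivalent hand-done analysis of $|\Phi(G)|=p^2$ using the cyclicity of $G'$ for class $2$ and Corollary \ref{3sl2c} for class $3$) to the case $|G'|=|\Phi(G)|=p$, where Proposition \ref{3sp1} forces $G\cong H\ltimes K$ with $K\cong G/G'\cong C_p\times C_p\times C_p$. At that point the paper simply consults Bender's determination of the groups of order $p^4$ to assert that only one group survives and that its center is $C_p\times C_p$, after which the contradiction $G/L_2\cong G/G'$ (abelian versus non-abelian quotient) finishes the job. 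You instead split on the isomorphism type of $Z(G)$ (having first pinned $|Z(G)|=p^2$ by the nondegenerate alternating form on $G/Z(G)$): when $Z(G)\cong C_p\times C_p$ you reproduce the paper's two-normal-subgroups contradiction without needing the tables, and when $Z(G)\cong C_{p^2}$ — the case the paper's citation silently excludes — you give a clean structural argument: the $p$-th power map is a homomorphism onto $G'$ (odd $p$, class $2$), so $\Omega_1(G)$ is the nonabelian group of order $p^3$ and exponent $p$, hence $G$ contains no copy of $C_p^3$, contradicting $K\le G$. Your version is more self-contained and checkable, at the cost of being longer; the paper's is shorter but leans on an external classification and leaves the reader to verify that the cyclic-center candidates really are excluded by the $C_p^3$-subgroup requirement, which is exactly the content you supply explicitly.
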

\begin{proof}
Assume that $\Gamma_p(G)$ is complete. By Lemmas \ref{3sl1}, \ref{3sl2} and Proposition \ref{3sp2}, we are left with $|G^{\prime}|=|\Phi(G)|=p$. Let $H$ be a non-normal subgroup of $G$. By Proposition \ref{3sp1}, $G=H \ltimes K$ and $K \cong G/\Phi(G) \cong C_p \times C_p \times C_p$. By \cite[\textsection 3, p. 64]{hab}, we have only one choice for $G$ whose numbering in above cited reference is given by $1$. By \cite[\textsection 3, p. 64]{hab}, $G^{\prime} \cong C_p$ and $Z(G) \cong C_p \times C_p$. Since $|G^{\prime}|=p$, we can choose a subgroup $K$ of $G$ of order $p$ distinct from $G^{\prime}$. By Proposition \ref{3sp1}, $G/K \cong G/G^{\prime}$. This is a contradiction.
 \end{proof}

\begin{proposition}\label{3st2}
Let $G$ be a group of order $p^5$ such that $|\Phi(G)|=p$. Then $\Gamma_p(G)$ is not complete.
\end{proposition}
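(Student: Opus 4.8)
The plan is to assume $\Gamma_p(G)$ is complete and derive a contradiction by exhibiting two normal subgroups of order $p$ whose quotients fail to be isomorphic, which Proposition \ref{3sp1} forbids.

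First I would pin down the structure forced by the hypotheses. Since $G$ is non-$p$-central it is non-abelian, so $G' \neq 1$; as $G' \subseteq \Phi(G)$ and $|\Phi(G)| = p$, this gives $G' = \Phi(G) \cong C_p$. The standard argument that a $p$-group acts trivially on a normal subgroup of order $p$ (conjugation lands in $\operatorname{Aut}(C_p)\cong C_{p-1}$, of order prime to $p$) shows both that $G' \subseteq Z(G)$ and, crucially, that the normal subgroups of $G$ of order $p$ are exactly the order-$p$ subgroups contained in $Z(G)$. So everything reduces to understanding $Z(G)$, and in particular to deciding whether there is a normal subgroup of order $p$ other than $G'$.

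Next I would invoke completeness. By Proposition \ref{3sp1} (cf.\ Corollary \ref{3se1}), applied with the normal subgroup $L=\Phi(G)=G'$, one gets $G\cong H\ltimes K$ with $K\cong G/G'=G/\Phi(G)$; since $G/\Phi(G)$ is elementary abelian this forces $K\cong C_p\times C_p\times C_p\times C_p$. I would then analyze the conjugation action of a generator $h$ of $H\cong C_p$ on $K\cong\mathbb{F}_p^4$: it is a matrix $A$ with $A^p=I$, hence unipotent, $A=I+N$ with $N$ nilpotent. Computing commutators shows $G'=\operatorname{Im}(N)$, so $|G'|=p$ forces $\operatorname{rank}(N)=1$; a rank-one nilpotent $4\times 4$ matrix has Jordan type $(2,1,1)$ and so satisfies $N^2=0$, whence $\ker N$ has dimension $3$. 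A direct check of centrality then gives $Z(G)\cap K=\ker N\cong C_p^3$, so $Z(G)$ is non-cyclic. Consequently $Z(G)$ contains a subgroup $L$ of order $p$ with $L\neq G'$; being central, $L$ is normal, so Proposition \ref{3sp1} demands $G/L\cong K\cong G/G'$. But $G/G'$ is abelian, whereas $(G/L)'=G'L/L\cong G'\cong C_p$ is nontrivial (as $L\cap G'=1$), so $G/L$ is non-abelian; these quotients cannot be isomorphic, and the contradiction is complete.

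The step I expect to be the main obstacle is ruling out groups with cyclic center, above all the extraspecial groups of order $p^5$, for which $Z(G)=G'=\Phi(G)\cong C_p$ and no such $L\neq G'$ exists. The point that dissolves this worry is that completeness, via Proposition \ref{3sp1}, forces $K\cong C_p^4$ to be an \emph{abelian} normal subgroup of index $p$; the unipotent-action analysis then shows any such group automatically has center of order $p^3$, so the cyclic-center (extraspecial) configuration is simply incompatible with the semidirect structure that completeness imposes. Pinning down the matrix computation exactly, namely $\operatorname{rank}(N)=1\Rightarrow N^2=0\Rightarrow |\ker N|=p^3$, is the one genuinely computational part, and it is what guarantees a second normal subgroup of order $p$.
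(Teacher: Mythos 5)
Your proof is correct, and its overall skeleton matches the paper's: assume completeness, invoke Corollary \ref{3se1} to get $G\cong H\ltimes K$ with $K\cong C_p\times C_p\times C_p\times C_p$, show $Z(G)$ contains an order-$p$ subgroup $L\neq G'$, and contradict Proposition \ref{3sp1} via $G/L\ncong G/G'$. The genuine difference lies in the middle step: the paper identifies $G$ by citing Bender's classification of groups of order $p^5$ (``only one choice for $G$'', with $Z(G)\cong C_p\times C_p\times C_p$ read off from the table), whereas you derive $Z(G)\cap K=\ker N\cong C_p^3$ by hand from the unipotent action of $H$ on $K\cong\F_p^4$: $A^p=I$ forces $A=I+N$ with $N$ nilpotent, $G'=\operatorname{Im}(N)$ together with $|G'|=p$ forces $\operatorname{rank}(N)=1$, hence Jordan type $(2,1,1)$, $N^2=0$, and $\dim\ker N=3$. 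Your route is more self-contained and also makes explicit why the two quotients differ ($G/G'$ is abelian while $(G/L)'\cong G'\cong C_p$ is nontrivial since $L\cap G'=1$), a point the paper leaves to the reader; the paper's route is shorter but leans on an external classification. Your closing remark about extraspecial groups is also handled correctly: the semidirect structure forced by completeness already rules out cyclic center, so no separate case is needed.
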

\begin{proof}
Assume that $\Gamma_p(G)$ is complete. Note that $G^{\prime}=\Phi(G)$. By Corollary \ref{3se1}, $G \cong H \ltimes K$ where $H$ is a non-normal subgroup of $G$ of order $p$ and $K \cong C_p \times C_p \times C_p \times C_p$. By \cite[\textsection 4, p. 65]{hab}, we have only one choice for $G$ which is numbered by $1$ in this reference. By \cite[\textsection 4, p. 65]{hab}, $Z(G) \cong C_p \times C_p \times C_p$. Choose $y \in Z(G) \setminus G^{\prime}$. Take $L_1=G^{\prime}$ and $L_2=\langle y \rangle$. Then one can observe that $G/L_1 \ncong G/L_2$. This is a contradiction to the Proposition \ref{3sp1}.   
\end{proof}

\begin{proposition}
Let $G$ be group of order $p^5$ such that $|\Phi(G)|=p^3$. Then $\Gamma_p(G)$ is not complete.
\end{proposition}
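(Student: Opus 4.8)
The plan is to assume, for contradiction, that $\Gamma_p(G)$ is complete and to extract two consequences of this hypothesis from the results already established. Since $G$ is not $p$-central it is non-abelian, and from $|G/\Phi(G)|=p^{2}$ we see that $G$ is $2$-generated, say $G=\langle x,y\rangle$. By the observation preceding Lemma~\ref{3sl1} the group $\Phi(G)$ is abelian, and by Lemma~\ref{3sl1} it is non-cyclic; being of order $p^{3}$ it is therefore isomorphic to $C_{p}\times C_{p}\times C_{p}$ or to $C_{p^{2}}\times C_{p}$. Completeness now supplies two levers: by Proposition~\ref{3sp1} all quotients $G/L$ by normal subgroups $L$ of order $p$ are mutually isomorphic (each is isomorphic to the fixed complement $K$), and by Lemma~\ref{3sl2} every subgroup of order $p$ contained in $\Phi(G)$ is normal, hence central. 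Consequently every order-$p$ subgroup of $\Phi(G)$ is an admissible choice of $L$, and the whole argument reduces to producing two such subgroups $L_{1},L_{2}$ with $G/L_{1}\ncong G/L_{2}$.

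I would separate the quotients by two characteristic invariants. For a central subgroup $L$ of order $p$ one has $(G/L)'=G'L/L$ and $\Phi(G/L)=\Phi(G)L/L$, so $|(G/L)'|=|G'|/|G'\cap L|$ and $\Phi(G/L)\cong\Phi(G)/(\Phi(G)\cap L)$. The first step is to dispose of $|G'|=p$: then $G'\subsetneq\Phi(G)$ and $\Phi(G)$ is non-cyclic, so Proposition~\ref{3sp2} applies directly and contradicts completeness. This leaves $|G'|\in\{p^{2},p^{3}\}$, which I would branch according to whether $\Omega_{1}(\Phi(G))\le G'$. If it is not, some order-$p$ subgroup $L_{2}$ of $\Phi(G)$ avoids $G'$ while any order-$p$ subgroup $L_{1}\le G'$ lies in $G'$ (both central by Lemma~\ref{3sl2}); then $|(G/L_{1})'|=|G'|/p<|G'|=|(G/L_{2})'|$, so the quotients differ.

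In the complementary branch $\Omega_{1}(\Phi(G))\le G'$ the derived-order test degenerates, and I would instead use the isomorphism type of $\Phi(G/L)\cong\Phi(G)/L$. Here $|G'|=p^{2}$ forces $\Phi(G)\cong C_{p^{2}}\times C_{p}$ (a socle count, since an elementary abelian $\Phi(G)$ would have $|\Omega_{1}(\Phi(G))|=p^{3}>|G'|$), while $|G'|=p^{3}$ gives $G'=\Phi(G)$; in either case, if $\Phi(G)\cong C_{p^{2}}\times C_{p}$, write $\Phi(G)=\langle u\rangle\times\langle v\rangle$ with $|u|=p^{2}$ and $|v|=p$. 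The central subgroups $L_{1}=\langle u^{p}\rangle$ and $L_{2}=\langle v\rangle$ then give $\Phi(G)/L_{1}\cong C_{p}\times C_{p}$ but $\Phi(G)/L_{2}\cong C_{p^{2}}$, so $G/L_{1}\ncong G/L_{2}$.

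The single configuration escaping both tests, and the main obstacle, is $\Phi(G)\cong C_{p}\times C_{p}\times C_{p}$ with $\Omega_{1}(\Phi(G))\le G'$, which forces $G'=\Phi(G)$. I expect to eliminate it outright rather than to separate quotients within it: since $\Phi(G)$ is elementary abelian, $\Omega_{1}(\Phi(G))=\Phi(G)$, so Lemma~\ref{3sl2} gives $\Phi(G)\le Z(G)$, whence $G'\le Z(G)$ and $G$ has nilpotency class at most $2$; but a $2$-generated class-$2$ group satisfies $G'=\langle[x,y]\rangle$, which is cyclic, contradicting $G'=\Phi(G)\cong C_{p}\times C_{p}\times C_{p}$. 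Hence this case cannot arise, and in every surviving case the two exhibited normal subgroups of order $p$ yield non-isomorphic quotients, contradicting Proposition~\ref{3sp1}. The delicate points that will need care are the socle count pinning down $\Phi(G)\cong C_{p^{2}}\times C_{p}$ in the degenerate branch and the verification that each chosen $L_{i}$ is genuinely central.
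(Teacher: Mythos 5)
Your proof is correct, and it takes a genuinely different route from the paper's. The paper splits on the isomorphism type of $\Phi(G)$ and then repeatedly consults the classification of groups of order $p^5$ (Bender's tables and James's isoclinism families $\phi_4$, $\phi_8$) to rule out the possible combinations of $|Z(G)|$ and $|G'|$; your argument is classification-free. You instead distinguish quotients $G/L_1$ and $G/L_2$ by two computable invariants --- the order $|(G/L)'|=|G'|/|G'\cap L|$ when $\Omega_1(\Phi(G))\not\le G'$, and the isomorphism type of $\Phi(G/L)\cong\Phi(G)/L$ when $\Phi(G)\cong C_{p^2}\times C_p$ --- and you kill the one residual configuration $G'=\Phi(G)\cong C_p\times C_p\times C_p$ by the observation that Lemma \ref{3sl2} forces $\Phi(G)\le Z(G)$, so $G$ is $2$-generated of class $2$ and hence has cyclic derived subgroup. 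All the supporting facts check out: $G'\le\Phi(G)$ and $\Phi(G/L)=\Phi(G)L/L$ hold for $p$-groups, normal subgroups of order $p$ in a $p$-group are central so every $L\le\Phi(G)$ of order $p$ is an admissible choice in Proposition \ref{3sp1}, the socle count correctly pins down $C_{p^2}\times C_p$ when $|G'|=p^2$, and bilinearity of commutators modulo the centre gives $G'=\langle[x,y]\rangle$ in the class-$2$ case. What your approach buys is self-containedness and transparency (no reliance on external tables, and it would transfer to analogous situations where no classification is available); what the paper's buys is brevity, at the cost of leaning on two cited classifications whose relevant entries the reader must verify independently.
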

\begin{proof}
Assume that $\Gamma_p(G)$ is complete. By Lemma \ref{3sl1}, $\Phi(G) \cong C_p \times C_p \times C_p$ or $\Phi(G) \cong C_p \times C_{p^2}$. Assume that $\Phi(G) \cong C_p \times C_p \times C_p$. By Lemma \ref{3sl2}, $\Phi(G) \subseteq Z(G)$. Since $|G|=p^5$, $\Phi(G)=Z(G)$. By \cite{hab}, there is no group of order $p^5$ such that $|Z(G)|=p^3$ and $|G^{\prime}|=p^2$ and $|Z(G)|=p^3$ and $|G^{\prime}|=p^3$. Thus $|G^{\prime}|=p$. But by Proposition \ref{3sp2}, we get a contradiction. Thus $\Phi(G) \cong C_p \times C_{p^2}$.

By Lemma \ref{3sl2}, $Z(G)$ contains a subgroup isomorphic to $C_p \times C_p$. By \cite{hab}, there is no group of order $p^5$ such that $|Z(G)|=p^3$ and $G^{\prime} \cong C_p \times C_{p^2}$ or $|Z(G)|=p^3$ and $|G^{\prime}|=p^2$. Hence $Z(G) \cong C_p \times C_p$ and $|G^{\prime}|$ is either $p$ or $p^2$. By Proposition \ref{3sp2}, $|G^{\prime}| \neq p$. Hence $|G^{\prime}|=p^2$. Assume that $G^{\prime} \cong C_{p^2}$. By \cite[\textsection 4, p. 618]{rj}, there is a group in isoclinism family $\phi_8$. But in this case, $|Z(G)|=p$. This is a contradiction. Thus $G^{\prime} \cong C_p \times C_p$. By Lemma \ref{3sl2}, $Z(G)=G^{\prime}$. By \cite[\textsection 4, p. 618]{rj}, there are groups in isoclinism family $\phi_4$. But, in this case $G/Z(G)$ is an elementary abelian $p$-group. This is a contradiction.     
\end{proof}

\begin{proposition}
Let $G$ be group of order $p^5$ such that $|\Phi(G)|=p^2$. Then $\Gamma_p(G)$ is not complete unless $\Phi(G)=G^{\prime}=Z(G) \cong C_p \times C_p$.
\end{proposition}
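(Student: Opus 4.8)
The plan is to assume that $\Gamma_p(G)$ is complete and deduce that $\Phi(G)=G^{\prime}=Z(G)\cong C_p\times C_p$. Since $G$ is non $p$-central it is non-abelian, so Lemma \ref{3sl1} applies, and because $|\Phi(G)|=p^2\neq p$ it forces $\Phi(G)$ to be the only non-cyclic group of order $p^2$, namely $\Phi(G)\cong C_p\times C_p$. First I would pin down $G^{\prime}$. As $G/\Phi(G)$ is elementary abelian we have $G^{\prime}\leq\Phi(G)$, so $|G^{\prime}|\in\{p,p^2\}$ (it is nontrivial since $G$ is non-abelian). If $|G^{\prime}|=p$, then $G^{\prime}\subsetneq\Phi(G)$ with $\Phi(G)$ non-cyclic, and Proposition \ref{3sp2} gives that $\Gamma_p(G)$ is not complete, a contradiction. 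Hence $|G^{\prime}|=p^2$ and $G^{\prime}=\Phi(G)\cong C_p\times C_p$.

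The key step is to locate $\Phi(G)$ inside the center. By Lemma \ref{3sl2}, completeness forces every subgroup of order $p$ contained in $\Phi(G)$ to be normal in $G$; and a normal subgroup of order $p$ in a $p$-group is automatically central, since conjugation gives a homomorphism $G\rightarrow\operatorname{Aut}(C_p)\cong C_{p-1}$ which must be trivial. Because $\Phi(G)\cong C_p\times C_p$ has exponent $p$, every nontrivial element generates such an order-$p$ subgroup, so every element of $\Phi(G)$ is central; that is, $\Phi(G)\subseteq Z(G)$, and in particular $|Z(G)|\geq p^2$.

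It then remains to rule out $|Z(G)|=p^3$. If $|Z(G)|=p^3$, then $G/Z(G)$ has order $p^2$ and is non-cyclic (else $G$ would be abelian), so $G/Z(G)\cong C_p\times C_p$; but when $G/Z(G)$ is generated by two elements, the class of $G$ is at most $2$ and the commutator map is an alternating bilinear pairing whose image $G^{\prime}$ is generated by a single commutator $[a,b]$, hence cyclic, contradicting $G^{\prime}\cong C_p\times C_p$. Therefore $|Z(G)|=p^2$, and combined with $\Phi(G)\subseteq Z(G)$ this yields $Z(G)=\Phi(G)$. Together with $G^{\prime}=\Phi(G)$ from the first step, this gives $\Phi(G)=G^{\prime}=Z(G)\cong C_p\times C_p$, as required.

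I expect the main obstacle to be the centrality step: recognizing that Lemma \ref{3sl2}, combined with the elementary abelian structure of $\Phi(G)$, yields the inclusion $\Phi(G)\subseteq Z(G)$ outright is precisely what lets one bypass the delicate case analysis over the isoclinism families of \cite{rj} and the explicit group lists of \cite{hab} used in the neighbouring propositions. Should the structural route fail, the fallback would be to argue via the classification, where the real work would be verifying that no group of order $p^5$ with $|\Phi(G)|=|G^{\prime}|=|Z(G)|=p^2$ has $Z(G)\neq G^{\prime}$; the argument above is attractive precisely because it avoids that bookkeeping.
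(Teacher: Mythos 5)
Your proposal is correct, and it follows the paper's proof step for step until the very last stage: like the paper, you invoke Lemma \ref{3sl1} to get $\Phi(G)\cong C_p\times C_p$, Proposition \ref{3sp2} to exclude $|G^{\prime}|=p$ and conclude $G^{\prime}=\Phi(G)$, and Lemma \ref{3sl2} (together with the standard fact that a normal subgroup of order $p$ in a $p$-group is central) to get $\Phi(G)\subseteq Z(G)$. Where you genuinely diverge is in eliminating $|Z(G)|=p^3$: the paper simply cites Bender's classification of groups of order $p^5$ to assert that no such group has $|Z(G)|=p^3$ and $|G^{\prime}|=p^2$, whereas you give a self-contained structural argument --- $G/Z(G)\cong C_p\times C_p$ forces class $2$, the commutator map descends to an alternating bilinear pairing on the two-generated quotient $G/Z(G)$, so $G^{\prime}=\langle[a,b]\rangle$ is cyclic, contradicting $G^{\prime}\cong C_p\times C_p$. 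Your route is more elementary and more robust (it needs no external table and would generalize to any order), at the modest cost of a slightly longer argument; the paper's route is shorter on the page but leans on a classification reference. One trivial point you pass over silently, as does the paper: the cases $|Z(G)|\geq p^4$ are excluded because $G/Z(G)$ would then be cyclic, forcing $G$ abelian; it is worth a clause to say so explicitly.
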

\begin{proof}
Let $G$ be a group of order $p^5$ other than the case $\Phi(G)=G^{\prime}=Z(G) \cong C_p \times C_p$. Assume that $\Gamma_p(G)$ is complete. By Lemma \ref{3sl1}, $\Phi(G) \cong C_p \times C_p$. By Proposition \ref{3sp2}, $|G^{\prime}| \neq p$. Hence $\Phi(G)=G^{\prime}$. By Lemma \ref{3sl2}, $\Phi(G) \subseteq Z(G)$. By \cite{hab}, there is no group of order $p^5$ such that $|Z(G)|=p^3$ and $|G^{\prime}|=p^2$. Thus $\Phi(G)=G^{\prime}=Z(G) \cong C_p \times C_p$. This is a contradiction.
\end{proof}

\begin{remark}
We write SmallGroup($n$,$m$) for the $m^{th}$ group of order $n$ as quoted
in the "Small Groups" library in GAP (\cite{gap}). Using GAP calculations, we found that SmallGroup($243$,$37$) satisfies the condition of Proposition \ref{3sp1}. Hence $\Gamma_3(G)$ is complete, where $G$=SmallGroup($243$,$37$). One can check that $G \cong C_3 \ltimes (C_3 \times C_3 \times C_3 \times C_3)$ and $\Phi(G)=G^{\prime}=Z(G) \cong C_3 \times C_3$.
\end{remark} 

\noindent \textbf{Acknowledgement:} Authors are thankful to Dr. R. P. Shukla, Department of Mathematics, University of Allahabad, India for suggesting this problem and his valuable discussion. Authors are also thankful to Prof. Derek Holt, Mathematics Institute, University of Warwick, U.K. regarding Proposition \ref{3sl3}.  

\end{document}